\newtheorem{theorem}{Theorem}[section]
\newtheorem{proposition}[theorem]{Proposition}
\newtheorem{lemma}[theorem]{Lemma}
\newtheorem{remark}[theorem]{Remark}
\newtheorem{corollary}[theorem]{Corollary}
\newtheorem{ass}[theorem]{Assumption}
\numberwithin{equation}{section}
\def\s{\varsigma}
\renewcommand{\epsilon}{\varepsilon}
\newcommand{\eps}{\varepsilon}
\newcommand{\R}{\mathbb{R}}
\newcommand{\N}{\mathbb{N}}
\newcommand{\T}{\mathbb{T}}
\DeclareMathOperator{\sign}{sign}
\def\RR{{\mathbb R}}
\def\NN{{\mathbb N}}
\def\ZZ{{\mathbb Z}}
\def\be{\begin{equation}}
\def\ee{\end{equation}}
\title{
Highly-oscillatory problems with time-dependent vanishing frequency
}
\author{ 
Philippe Chartier\textsuperscript{1}, Mohammed Lemou\textsuperscript{2}, Florian M\'ehats\textsuperscript{3} and Gilles Vilmart\textsuperscript{4}
}
\begin{document}
\footnotetext[1]{
Univ Rennes, INRIA, IRMAR - UMR 6625, F-35000 Rennes, France.
Philippe.Chartier@inria.fr}
\footnotetext[2]{
Univ Rennes, CNRS, IRMAR - UMR 6625, F-35000 Rennes, France.
Mohammed.Lemou@univ-rennes1.fr}
\footnotetext[3]{
Univ Rennes, IRMAR - UMR 6625, F-35000 Rennes, France.
Florian.Mehats@univ-rennes1.fr}
\footnotetext[4]{
Universit\'e de Gen\`eve, Section de math\'ematiques, 2-4 rue du Li\`evre, CP 64, CH-1211 Gen\`eve 4, Switzerland, Gilles.Vilmart@unige.ch}

\maketitle

\begin{abstract}
In the analysis  of highly-oscillatory evolution problems, it is commonly assumed that a single frequency is present and that it is either constant or, at least, bounded from below by a strictly positive constant uniformly in time.  
Allowing for the possibility that the frequency actually depends on time and vanishes at some instants introduces additional difficulties from both the asymptotic analysis and numerical simulation points of view.  This work is a first step towards the resolution of these difficulties. In particular, we show that it is still possible in this situation to infer the asymptotic behaviour of the solution at the price of more intricate computations and we derive a second order uniformly accurate numerical method. 

\smallskip
\noindent
{\it Keywords:\,}
highly-oscillatory problems,  time-dependent vanishing frequency,  asymptotic expansion, uniform accuracy.
\smallskip

\noindent
{\it AMS subject classification (2010):\,}
74Q10, 65L20.
 \end{abstract}

\section{Introduction}
\subsection{Context}
In this paper, we are concerned with oscillatory  differential equations whose frequency of oscillation depends on time. More precisely, we consider systems of differential equations (for some $T>0$) of the form 
\begin{eqnarray} \label{eq-Ueps}
\dot U^\eps(t) =\frac{\gamma(t)}{\eps}AU^{\eps}(t) + f\Big(U^{\eps}(t)\Big)   \in \RR^d,  \quad U^\eps(0)=U_0 \in \RR^d, \quad 0 \leq t \leq T,
\end{eqnarray} 
where the dot stands for the time derivative, the matrix $A\in\R^{d\times d}$ is supposed to be diagonalizable and to have all its eigenvalues in $i\ZZ$ (equivalently $\exp(2\pi A)=I$), where the function $f$ is assumed to be sufficiently smooth, where the parameter $\eps$  lies in $(0,1]$, and where the real-valued function $\gamma$ is assumed to be continuous on  $[0,+\infty)$.  However, {\sl the  main novel assumption} in this article is that the {\sl function $\gamma$ vanishes at some instant $t_0$}, or more precisely, that there exists (a unique)  $t_0\in [0,T]$ such that $ \gamma(t_0)=0$. 

As a related recent work, we mention the study \cite{ArD18} for the
uniformly accurate approximation of the stationary Schr\"odinger
equation in the presence of turning points which are spatial points used in quantum tunnelling models and where the spatial oscillatory frequency vanishes (analogously to our assumption $\gamma(t_0)=0$). While only the linear case is studied in \cite{ArD18} based on a
Wentzel-Kramers-Brillouin expansion, an additional difficulty is that
the Schr\"odinger equation solution blows up in the neighbourhood of
such a turning point asymptotically in the semi-classical limit where
$\eps\rightarrow 0$.

Our goal is to investigate problem (\ref{eq-Ueps}) under these new circumstances, from both the asymptotic analysis (when $\eps\to 0$) and the numerical approximation viewpoints. For the sake of simplicity in this introductory paper, we assume that $\gamma(t)$ is of the form
\footnote{Note that applying an analytic time-transformation to \eqref{eq-Ueps} allows to consider 
more general analytic functions $\gamma(t)$ and our analysis is not restricted to the polynomial case.}
$$
\exists p\in \NN^*, \quad \forall t\geq 0, \quad 
\gamma(t) = (p+1)(t-t_0)^p.
$$  
We emphasize that this situation is not covered by the  standard theory of averaging as considered e.g. in \cite{P69,SV85,HLW06,CMSS10,CMSS15,CLM17}, and that recent numerical approaches \cite{CLM13,CCMSS11,CCLM15,CLMV18} are ineffective. 
All techniques therein indeed rely fundamentally on the assumption that $\gamma(t)\geq \gamma_0$ uniformly in time, for some constant $\gamma_0 >0$, and can not be transposed to the context under consideration here. 

\subsection{Formulation as a periodic non-autonomous problem and main results} 
Upon defining $ u^\eps(t) = \exp\left( - \frac{(t-t_0)^{p+1}}{\eps}A\right) U^\eps(t)$, the original equation \eqref{eq-Ueps} may be rewritten 
  \begin{eqnarray} \label{eq-ueps}
\dot u^\eps(t) = F \left(\frac{(t-t_0)^{p+1}}{\eps}, u^{\eps}(t)\right),    \quad u^\eps(0)= u_0^\eps:=\exp \left(-\frac{(-t_0)^{p+1}}{\eps} A \right) \, U_0, 
\end{eqnarray}
where, for $\nu=\pm 1$,  $F(\theta, u)=   e^{- \theta  A } f( e^{\theta A} u)$ is 2$\pi$-periodic w.r.t. $\theta$ and smooth in $(\theta,u)$. 
We make the following assumption, which is naturally satisfied if $f$ is assumed to be locally Lipschitz continuous:
\begin{ass}
\label{hyp1}
There exist $T>0$ and $M>0$ such that for all $0 <\eps\leq 1$, \eqref{eq-ueps} has a unique solution on $[0,T]$, bounded by $M$, uniformly w.r.t. $\eps$. 
\end{ass}
In the sequel, $C$ will denote a {\it generic constant} that only depends on $t_0$ and on the bounds of $\partial^\alpha_2 F$, $\alpha=0,1,2,3$, on the set  $\{(\theta,u), \theta \in \T, |u|\leq 2 M\}$, where $\T=[0,2\pi]$.

The aim of this work is now twofold. 
On the one hand, we show that, under mild and standard assumptions, an averaged equation for \eqref{eq-ueps} of the form 
\begin{equation}
\label{averagedmodelu}
\forall t \in [0,T], \quad \dot{\underline{u}}^\eps(t)= \left\langle F\right\rangle(\underline{u}^\eps(t)),\qquad \underline{u}^\eps(0)=u_0^\eps
\end{equation}
persists.\footnote{Note that here as in the sequel, we denote the average of a function $\omega: \T \mapsto \R^d$ by
$$\left\langle \omega\right\rangle=\frac{1}{2\pi}\int_0^{2\pi}\omega(\theta)d\theta.
$$}
More precisely, we have the following theorem (see the proof in Section \ref{sect:averaged}), which can be refined with the next-order asymptotic term (see Section \ref{subsec-next}).
\begin{theorem} \label{thm:main}
Consider the  solutions $u^\eps(t),\underline u^\eps(t)$ of problems \eqref{eq-ueps}, \eqref{averagedmodelu}, respectively, on the time interval $[0,T]$. 
Then, there exists $\eps_0>0$ such that for all $\eps\in]0,\eps_0[$, and all $t \in [0,T]$,
\begin{equation} \label{eq:uepsubareps}
|u^\eps(t) - \underline{u}^\eps(t)| \leq C \eps^{\frac{1}{p+1}}.
\end{equation}
\end{theorem}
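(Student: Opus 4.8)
The obstruction to applying classical averaging directly is that the instantaneous frequency $\omega(t):=\gamma(t)/\eps=(p+1)(t-t_0)^p/\eps$ vanishes at $t_0$, so the usual near-identity change of variables — which carries a factor $1/\omega(t)$ — degenerates there. The strategy is therefore to fix a small $\delta>0$, to excise the window $[t_0-\delta,t_0+\delta]$ on which we do \emph{not} try to average, to run the averaging machinery on the remaining interval(s) where $|\omega|\ge(p+1)\delta^p/\eps$, to bound the contribution of the window crudely, and finally to optimise $\delta$; the competition between the averaging defect $\sim\eps/\delta^p$ and the window size $\sim\delta$ will force $\delta\simeq\eps^{1/(p+1)}$, which is the origin of the exponent in \eqref{eq:uepsubareps}. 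We detail the generic case $0<t_0<T$; if $t_0\in\{0,T\}$ one of the side intervals is empty and the argument only simplifies.

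\noindent\emph{Step 1 (averaging away from $t_0$).}
Split $F=\langle F\rangle+\widetilde F$ with $\langle\widetilde F\rangle=0$ and set $G(\theta,u)=\int_0^\theta\widetilde F(s,u)\,ds$; then $G$ is $2\pi$-periodic and smooth in $(\theta,u)$, $\partial_\theta G=\widetilde F$, and $G,\partial_u G,\langle F\rangle,\partial_u\langle F\rangle$ are all bounded by $C$ on $\{|u|\le 2M\}$. On a subinterval $J$ of $[0,t_0-\delta]$ or of $[t_0+\delta,T]$, writing $\theta(t)=(t-t_0)^{p+1}/\eps$ so that $\dot\theta=\omega$, introduce
$$v(t)=u^\eps(t)-\frac{1}{\omega(t)}\,G\big(\theta(t),u^\eps(t)\big).$$
Differentiating and using $\dot u^\eps=F(\theta,u^\eps)$ and $\partial_\theta G=\widetilde F$, the oscillatory term $\widetilde F(\theta,u^\eps)$ cancels and one obtains $\dot v=\langle F\rangle(v)+R$ with
$$|R(t)|\le C\Big(\frac{1}{|\omega(t)|}+\frac{|\dot\omega(t)|}{\omega(t)^2}\Big)\le\frac{C\eps}{|t-t_0|^{p+1}},$$
where we used $|\omega(t)|=(p+1)|t-t_0|^p/\eps$, $|\dot\omega(t)|=p(p+1)|t-t_0|^{p-1}/\eps$ and $|t-t_0|\le T$. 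Consequently, on $J$,
$$|u^\eps(t)-v(t)|=\frac{|G(\theta(t),u^\eps(t))|}{|\omega(t)|}\le\frac{C\eps}{\delta^p},\qquad \int_J|R|\le C\eps\int_\delta^{\infty}r^{-(p+1)}\,dr\le\frac{C\eps}{\delta^p}.$$
Since $\langle F\rangle$ is Lipschitz on $\{|u|\le 2M\}$, comparing $v$ with the averaged solution $z$ of $\dot z=\langle F\rangle(z)$, $z(\inf J)=u^\eps(\inf J)$, through Gr\"onwall's lemma gives $|v(t)-z(t)|\le C\eps/\delta^p$, hence $|u^\eps(t)-z(t)|\le C\eps/\delta^p$ on $J$. (A standard continuation argument, valid once $\eps_0$ is small enough that $C\eps/\delta^p\le M/2$, keeps $v$ and $z$ inside $\{|u|\le 2M\}$.) Applied on $J=[0,t_0-\delta]$, where $z=\underline u^\eps$ because $u^\eps(0)=\underline u^\eps(0)=u_0^\eps$, this yields $|u^\eps(t)-\underline u^\eps(t)|\le C\eps/\delta^p$ there, in particular at $t=t_0-\delta$.

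\noindent\emph{Step 2 (the window) and conclusion.}
On $[t_0-\delta,t_0+\delta]$ we use only that $|\dot u^\eps|=|F(\theta,u^\eps)|\le C$ and $|\dot{\underline u}^\eps|=|\langle F\rangle(\underline u^\eps)|\le C$, so $u^\eps-\underline u^\eps$ varies by at most $C\delta$ there and
$$|u^\eps(t_0+\delta)-\underline u^\eps(t_0+\delta)|\le|u^\eps(t_0-\delta)-\underline u^\eps(t_0-\delta)|+C\delta\le C\big(\eps/\delta^p+\delta\big).$$
Finally apply Step 1 on $J=[t_0+\delta,T]$ with $z$ the averaged solution issued from $z(t_0+\delta)=u^\eps(t_0+\delta)$: this gives $|u^\eps(t)-z(t)|\le C\eps/\delta^p$, while Gr\"onwall for the Lipschitz equation \eqref{averagedmodelu} gives $|z(t)-\underline u^\eps(t)|\le e^{CT}|u^\eps(t_0+\delta)-\underline u^\eps(t_0+\delta)|\le C(\eps/\delta^p+\delta)$. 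Adding up, $|u^\eps(t)-\underline u^\eps(t)|\le C(\eps/\delta^p+\delta)$ for every $t\in[0,T]$. Choosing $\delta=\eps^{1/(p+1)}$ — admissible for $\eps\le\eps_0$ with $\eps_0$ so small that $\delta<\min(t_0,T-t_0,1)$ and $C\eps/\delta^p=C\eps^{1/(p+1)}\le M/2$ — balances the two contributions and produces $|u^\eps(t)-\underline u^\eps(t)|\le C\eps^{1/(p+1)}$, i.e.\ \eqref{eq:uepsubareps}.

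\noindent\emph{Main difficulty.}
The only genuinely new ingredient relative to standard averaging is the treatment of the window: near $t_0$ the oscillation is too slow for any cancellation, so that region must simply be charged to the error term, and the final rate is dictated by the trade-off $\eps/\delta^p$ versus $\delta$ rather than by the smoothness of $F$.
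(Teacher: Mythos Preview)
Your proof is correct and takes a genuinely different route from the paper.

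The paper does not excise a window around $t_0$. Instead it changes variable to $s=\Gamma(t)$, writes the integral formulation of the transformed equation \eqref{eq:diffs} with oscillatory remainder $R^\eps(s)$, and shows directly that $|R^\eps(s)|\le C\eps^{1/(p+1)}$ uniformly on $[0,S]$. The key device is the function
\[
\Omega_\nu(s,u)=\int_s^{+\infty}\sigma^{-\frac{p}{p+1}}\bigl(F_\nu(\sigma,u)-\langle F\rangle(u)\bigr)\,d\sigma,
\]
which is bounded all the way down to $s=0$ (Lemma~\ref{lemOmega}); identity \eqref{R1} then turns the oscillatory integrand into a total derivative of $\Omega_\nu$ plus a term that is again uniformly integrable. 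Gronwall concludes. In effect the paper builds the ``correct'' near-identity corrector that remains regular at the degenerate point, rather than switching it off there.

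Your argument is more elementary and perfectly adequate for Theorem~\ref{thm:main}: classical first-order averaging on $|t-t_0|\ge\delta$ gives $C\eps/\delta^p$, the window costs $C\delta$, and balancing yields $\eps^{1/(p+1)}$. What the paper's approach buys is extensibility: the same $\Omega_\nu$, together with Lemma~\ref{lem:integralI}, drives the second-order expansion of Proposition~\ref{prop2} and Corollary~\ref{coro}, which in turn underlies the uniformly accurate scheme of Section~\ref{sect-micmac}. Your excision method cannot easily be pushed to the next order, because the crude window bound $C\delta$ is already saturated at $\delta=\eps^{1/(p+1)}$.
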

Note that the bound
$|u^\eps(t) - \underline{u}^\eps(t)| \leq C \eps$ obtained in the classical case of a constant frequency  is degraded to \eqref{eq:uepsubareps}. 

On the other hand, we construct in the case $p=1$ a second-order {\it uniformly accurate} scheme for the approximation of $u^\eps$, that is to say a method for which the error and the computational cost remain independent of the value of $\eps$ (for more details on uniformly accurate methods, refer for instance to \cite{CCLM15,CLMV18}). 

\section{Averaging results}
\label{sec-asympt}
We introduce the following function $\Gamma:[0,T] \rightarrow [0,S]$ with $S=(T-t_0)^{p+1}+t_0^{p+1}$,
$$
\Gamma(t) := \int_{0}^t  |\gamma(\xi)|d\xi=t_0^{p+1}+ (t-t_0) \, |t-t_0|^{p}=t_0^{p+1}+ \mu_t \, (t-t_0)^{p+1}, \quad \mu_t=\sign(t-t_0)^p = \pm 1, $$ 
and notice right away that $\Gamma$ is invertible with inverse $\Gamma^{-1}:[0,S] \rightarrow [0,T]$ given by
$$
\Gamma^{-1}(s) = s_0^{\frac{1}{p+1}} + \sign(s-s_0) \; |s-s_0|^{\frac{1}{p+1}}, \quad s_0=t_0^{p+1}.
$$ 
\begin{figure}[H]
\centering
\includegraphics[width=0.33\textwidth]{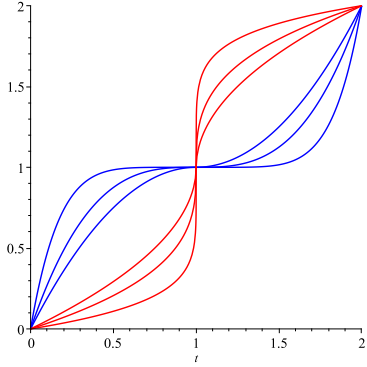}
\caption{The functions $\Gamma$ (in blue) and $\Gamma^{-1}$ (in red)  with $t_0=1$ and $T=2$ for $p=1,2,5$.}\label{fig1}
\end{figure}
Let us now consider $v^\eps(s)=u^\eps(t)$,  which, for $s \neq s_0$,  satisfies 
\begin{align} \label{eq:diffs}
\frac{d}{ds} v^\eps(s)
&= \frac{1}{\Gamma' \circ \Gamma^{-1}(s)} \, \dot u^\eps  \left(\Gamma^{-1}(s)\right) 
= \frac{1}{(p+1)|s-s_0|^{\frac{p}{p+1}}} \, F_{\mu_s} \left(\frac{s-s_0}{\eps}, v^{\eps}(s)\right)
\end{align}
with initial condition $v^\eps(0)=v_0^\eps:=u_0^\eps$, $\mu_s=\sign(s-s_0)^p$ and $F_\nu(\theta,u):=F(\nu \theta, u)$. 
As an immediate consequence of Assumption \ref{hyp1}, equation \eqref{eq:diffs} has a unique solution on $[0,S]$,  bounded by $M$ uniformly in $0 < \eps \leq 1$.

In this section, our aim is to show that there exists an averaged model for \eqref{eq:diffs} of the form 
\begin{equation}
\label{averagedmodel}
\forall s \in [0,S], \quad \dot{\underline{v}}^\eps(s)=\frac{1}{(p+1) |s-s_0|^\frac{p}{p+1}}\left\langle F\right\rangle(\underline{v}^\eps(s)),\qquad \underline{v}^\eps(0)=v_0^\eps,
\end{equation}
and then construct the first term of the asymptotic expansion of $v^\eps$ (see Section \ref{subsec-next}). 
Note that, despite the singularity at $s=s_0$ of the right-hand side of \eqref{averagedmodel}, its integral formulation clearly indicates the existence of a {\it continuous} solution on $[0,S]$.   

\subsection{Preliminaries}
Let us  introduce the following 2$\pi$-periodic zero-average functions 
$$G_{\nu}(\theta,u)=\int_0^\theta(F_{\nu}(\sigma,u)-\langle F\rangle(u))d\sigma-\left\langle \int_0^s(F_{\nu}(\sigma,u)-\langle F\rangle(u))d\sigma\right\rangle,$$
and
$$H_{\nu}(\theta,u)=\int_0^\theta G_{\nu}(\sigma,u)d\sigma-\left\langle \int_0^sG_{\nu}(\sigma,u)d\sigma\right\rangle.$$
It is clear that these functions and their derivatives in $u$ are uniformly bounded: for $\nu=\pm 1$, $|u|\leq 2M$, $v\in \RR^d$ and $s\in\RR$, we have
\begin{equation}\label{est0}|G_{\nu}(s,u)|+|H_{\nu}(s,u)|\leq C,\qquad |\partial_2 G_{\nu}(s,u)v|+ |\partial_2 H_{\nu}(s,u)v|\leq C|v|,\end{equation}
\begin{equation}\label{est0bis}|\partial^2_2 G_{\nu}(s,u)(v,v)|+ |\partial^2_2 H_{\nu}(s,u)(v,v)|\leq C|v|^2.\end{equation}
For all $\nu=\pm 1$, we eventually define the function
\begin{equation}\label{defOmega}
\forall u\in \RR^d, \forall s \in \RR_+, \quad \Omega_{\nu}(s,u)=\int_{s}^{+\infty}\frac{1}{\sigma^\frac{p}{p+1}}(F_\nu(\sigma,u)-\langle F\rangle(u))d\sigma.
\end{equation}
The following two technical lemmas will be useful all along this article.
\begin{lemma}
\label{lemOmega}
The function $\Omega_\nu$ is well-defined for all $s \in \RR_+$ and $u\in \RR^d$. Moreover, for all $u$ satisfying $|u|\leq 2M$, all $\nu=\pm1$, all $s \geq 0$ and all $v \in \RR^d$, we have the estimates
\begin{equation}\label{est1}
|\Omega_\nu (s,u)|\leq C, \quad 
|\partial_2 \Omega_\nu(s,u) v|\leq C|v|,\quad |\partial^2_2 \Omega_\nu(s,u) (v,v)|\leq C|v|^2.
\end{equation}
Restricting to strictly positive values of $s$, i.e. $s>0$, we have furthermore
\begin{equation}\label{est2}
\left|\Omega_\nu(s,u)\right|\leq \frac{C}{s^{\frac{p}{p+1}}},\quad \left|\partial_2\Omega_\nu(s,u)v\right|\leq \frac{C |v|}{s^{\frac{p}{p+1}}},
\end{equation}
and 
\begin{equation}\label{est3}
\left|\Omega_\nu(s,u)+\frac{G_\nu(s,u)}{s^\frac{p}{p+1}}\right|\leq \frac{C}{s^{1+\frac{p}{p+1}}},
\quad \left| \partial_2 \Omega_\nu(s,u)v+\frac{\partial_2 G_\nu(s,u)v}{s^{\frac{p}{p+1}}}\right|\leq \frac{C|v|}{s^{1+\frac{p}{p+1}}}.
\end{equation}
\end{lemma}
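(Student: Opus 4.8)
The plan is to establish each of the three groups of estimates in turn, exploiting the oscillatory cancellation encoded in the fact that $F_\nu(\sigma,u)-\langle F\rangle(u)$ is $2\pi$-periodic with zero average, together with the monotone decay of the weight $\sigma^{-p/(p+1)}$.

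First I would show that $\Omega_\nu$ is well-defined. Writing $\varphi_\nu(\sigma,u)=F_\nu(\sigma,u)-\langle F\rangle(u)$, note that $G_\nu(\cdot,u)$ is a primitive of $\varphi_\nu(\cdot,u)$ (up to an additive constant making it zero-average), so $\varphi_\nu=\partial_\theta G_\nu$. On any interval $[s,R]$ with $0<s\le R$, integrate by parts:
\begin{equation}\label{eq:IBPplan}
\int_s^R \frac{1}{\sigma^{\frac{p}{p+1}}}\varphi_\nu(\sigma,u)\,d\sigma = \left[\frac{G_\nu(\sigma,u)}{\sigma^{\frac{p}{p+1}}}\right]_s^R + \frac{p}{p+1}\int_s^R \frac{G_\nu(\sigma,u)}{\sigma^{1+\frac{p}{p+1}}}\,d\sigma.
\end{equation}
Since $G_\nu$ is bounded by \eqref{est0} and $\int^\infty \sigma^{-1-p/(p+1)}d\sigma<\infty$, the boundary term at $R$ vanishes as $R\to\infty$ and the integral converges absolutely; this gives existence of $\Omega_\nu(s,u)$ for $s>0$, and letting $s\to0^+$ (using that $\sigma^{-p/(p+1)}$ is integrable near $0$ since $p/(p+1)<1$) handles $s=0$. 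The same identity, read as $\Omega_\nu(s,u)=-G_\nu(s,u)\,s^{-p/(p+1)}+\frac{p}{p+1}\int_s^\infty G_\nu(\sigma,u)\sigma^{-1-p/(p+1)}d\sigma$, immediately yields the sharp bound \eqref{est3}: the remaining integral is $O(s^{-p/(p+1)}\cdot s^{-1}) = O(s^{-1-p/(p+1)})$ by pulling out the maximum of the weight, wait — more carefully, $\int_s^\infty \sigma^{-1-p/(p+1)}d\sigma = \frac{p+1}{p}\,s^{-p/(p+1)}\cdot\frac{1}{p+1}$... I would just compute $\int_s^\infty\sigma^{-1-p/(p+1)}d\sigma = \frac{p+1}{p}s^{-p/(p+1)}$ exactly, but the point for \eqref{est3} is to bound it by $C s^{-1-p/(p+1)}\cdot s = $ no. The correct route: split $\int_s^{2s}+\int_{2s}^\infty$; the first is $\le C s\cdot s^{-1-p/(p+1)}=Cs^{-p/(p+1)}$, hmm this is the same order as the boundary term, not smaller. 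So \eqref{est3} does \emph{not} come from this crude split; instead one must IBP \emph{again}, using $H_\nu$ as a primitive of $G_\nu$, to extract the extra factor $s^{-1}$. That second integration by parts, estimating $\int_s^\infty G_\nu\sigma^{-1-p/(p+1)}d\sigma$ via $H_\nu$, is what produces the genuine $s^{-1-p/(p+1)}$ decay in \eqref{est3}.

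For the $u$-derivatives, I would differentiate \eqref{defOmega} under the integral sign — justified by the uniform bounds \eqref{est0}--\eqref{est0bis} on $\partial_2 G_\nu,\partial_2^2 G_\nu$ (and similarly on $\partial_2 F_\nu$, which follow from the generic-constant convention on $\partial_2^\alpha F$) — and repeat the integration-by-parts argument with $G_\nu$ replaced by $\partial_2 G_\nu$ acting on $v$, respectively $\partial_2^2 G_\nu$ on $(v,v)$. This gives \eqref{est1} (boundedness on all of $\RR_+$, using integrability of $\sigma^{-p/(p+1)}$ near $0$ and the first IBP for large $\sigma$), the decay \eqref{est2} for $s>0$ (again the first IBP, keeping the boundary term), and the derivative half of \eqref{est3} (the second IBP with $\partial_2 H_\nu$).

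The main obstacle is bookkeeping rather than conceptual: one must track exactly how many integrations by parts are needed for each estimate (one for \eqref{est2}, two for \eqref{est3}) and verify that every boundary term at $\sigma=\infty$ genuinely vanishes and every resulting integral is absolutely convergent with the claimed power of $s$. A secondary subtlety is the behaviour near $s=0$: the integrand has an integrable singularity there (since $\frac{p}{p+1}<1$), so $\Omega_\nu$ extends continuously to $s=0$ and \eqref{est1} holds uniformly, but the sharper bounds \eqref{est2}--\eqref{est3} are necessarily restricted to $s>0$ because their right-hand sides blow up at $0$. I would treat $s\ge 1$ and $0<s<1$ separately when convenient, using the crude bound \eqref{est1} on the bounded range and the IBP-based bounds for $s\ge 1$, then patch.
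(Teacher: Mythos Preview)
Your approach is correct and essentially identical to the paper's: one integration by parts using $G_\nu$ as antiderivative of $F_\nu-\langle F\rangle$ yields well-definedness and \eqref{est2}, the splitting $\Omega_\nu(s,u)=\int_s^1(\cdots)+\Omega_\nu(1,u)$ (your ``integrability near $0$'' remark) gives \eqref{est1}, and a second integration by parts using $H_\nu$ as antiderivative of $G_\nu$ gives \eqref{est3}. The meandering self-correction in the middle of your write-up should of course be cleaned up, but the final strategy you land on matches the paper exactly.
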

\begin{proof}
We  only prove the results for $\Omega_\nu$ as their adaptation to $\partial_2 \Omega_\nu$ and $\partial^2_2 \Omega_\nu$ is immediate. An integration by parts yields
$$\Omega_\nu(s,u)=-\frac{G_\nu(s,u)}{s^{\frac{p}{p+1}}}+\frac{p}{p+1}\int_s^{+\infty}\frac{1}{\sigma^{1+\frac{p}{p+1}}}G_\nu(\sigma,u)d\sigma,$$
where, from \eqref{est0}, the last integral is convergent and bounded by $\frac{C}{s^\frac{p}{p+1}}$. This yields the well-posedness of $\Omega_\nu$ for all $s>0$ and \eqref{est2}. We now simply remark that for all $s\geq 0$
$$\Omega_\nu(s,u)=\int_s^{1}\frac{1}{\sigma^{\frac{p}{p+1}}}(F_\nu(\sigma,u)-\langle F\rangle(u))d\sigma+\Omega_\nu(1,u).$$
This gives the well-posedness for $s=0$ and \eqref{est1} can be deduced from \eqref{est2} written for $s=1$.
A second integration by parts then gives
$$\Omega_\nu(s,u)=-\frac{G_\nu(s,u)}{s^{\frac{p}{p+1}}}-\frac{p}{p+1}\frac{H_\nu(s,u)}{s^{1+\frac{p}{p+1}}}+\frac{p}{p+1}\left(1+\frac{p}{p+1}\right)\int_s^{+\infty}\frac{1}{\sigma^{2+\frac{p}{p+1}}}H_\nu(\sigma,u)d\sigma.$$
Previous integral is bounded by $\frac{C}{s^{1+\frac{p}{p+1}}}$ owing to \eqref{est0} and this yields \eqref{est3}. 
\end{proof}
\begin{remark} \label{rem:splusun}
 Since $\left(\frac{1+s}{s}\right)^{\frac{p}{p+1}} \leq 2$ for $s \geq 1$, estimates \eqref{est1} and \eqref{est2} also imply for instance that for all $s \geq 0$,
\begin{equation*}
\left|\Omega_\nu(s,u)\right|\leq \frac{C}{(1+s)^{\frac{p}{p+1}}} \quad \mbox{ and } \quad \left|\partial_2 \Omega_\nu(s,u)v\right|\leq \frac{C |v|}{(1+s)^{\frac{p}{p+1}}}.
\end{equation*}
\end{remark}
\begin{lemma} \label{lem:integralI}
For a given $p \in \N^*$, consider two smooth functions $\phi, \psi: \T \times \R^d \rightarrow\R^d$ satisfying the estimates
\begin{align} \label{eq:ass}
|\psi(\sigma,u)| \leq C \quad \mbox{ and } \quad \left| \phi(\sigma, u) + \frac{\psi(\sigma,u)}{(1+\sigma)^\frac{p}{p+1}}\right| \leq \frac{C}{(1+\sigma)^{1+\frac{p}{p+1}}},
\end{align}
for all $\theta \in \T$ and all $|u| \leq M$
and define, for $0 \leq a \leq b \leq S$, the integral
\begin{align} \label{eq:I}
\mathcal I(a,b)=\frac{1}{p+1} \int_{a}^b \frac{1}{|\sigma-s_0|^\frac{p}{p+1}} \phi \left(\frac{|\sigma-s_0|}{\eps},v^\eps(\sigma)\right) d\sigma
\end{align}
where $v^{\eps}$ satisfies \eqref{eq:diffs}. Then, if $p=1$, we have 
\begin{align} 
\forall b \in [0,s_0], \quad & \mathcal I(0,b)=\frac{\sqrt{\eps}}{2} \, \log\left(\frac{\eps+s_0-b}{s_0+\eps}\right) \, \left\langle \psi \right\rangle (v^\eps(b))+ {\cal O}(\sqrt{\eps}), \label{eq:I1}\\
\forall b \in [s_0,S], \quad & \mathcal I(s_0,b)=\frac{\sqrt{\eps}}{2} \, \log\left(\frac{\eps}{b-s_0+\eps}\right) \, \left\langle \psi \right\rangle (v^\eps(s_0)) + {\cal O}(\sqrt{\eps}), \label{eq:I1bis}
\end{align}
while if $p \geq 2$, we have the estimate
\begin{align} \label{eq:Ip}
\forall 0 \leq a \leq b \leq S, \quad |\mathcal I(a,b)| \leq C \eps^\frac{1}{p+1}. 
\end{align}
\end{lemma}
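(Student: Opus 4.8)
My plan is to remove the $\eps$-dependent prefactor by rescaling, subtract the leading part of $\phi$ prescribed by \eqref{eq:ass}, and --- in the delicate case $p=1$ --- extract the logarithm by an explicit computation. First I would change variables $r=|\sigma-s_0|/\eps$ in \eqref{eq:I}, handling separately the parts of $[a,b]$ contained in $[0,s_0]$ and in $[s_0,S]$ (and writing $\mathcal I(a,b)=\mathcal I(a,s_0)+\mathcal I(s_0,b)$ when $a\le s_0\le b$). Since $d\sigma=\pm\eps\,dr$ and $\eps\,(\eps r)^{-p/(p+1)}=\eps^{1/(p+1)}r^{-p/(p+1)}$, each piece becomes $\eps^{1/(p+1)}$ times $\frac{1}{p+1}\int_{r_1}^{r_2}r^{-p/(p+1)}\phi(r,v^\eps(s_0\pm\eps r))\,dr$ for suitable $0\le r_1\le r_2\le S/\eps$, the sign being $-$ on $[0,s_0]$ and $+$ on $[s_0,S]$. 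Setting $\rho(\sigma,u):=\phi(\sigma,u)+\psi(\sigma,u)(1+\sigma)^{-p/(p+1)}$, hypothesis \eqref{eq:ass} gives $|\psi|\le C$ and $|\rho(r,u)|\le C(1+r)^{-1-p/(p+1)}$, so the contribution of $\rho$ to $\mathcal I$ is bounded by $C\eps^{1/(p+1)}\int_0^{+\infty}r^{-p/(p+1)}(1+r)^{-1-p/(p+1)}\,dr$, a convergent integral ($p/(p+1)<1$ at the origin, $1+2p/(p+1)>1$ at infinity). When $p\ge2$ one moreover has $2p/(p+1)>1$, hence $\int_0^{+\infty}r^{-p/(p+1)}(1+r)^{-p/(p+1)}\,dr<\infty$, and the remaining $\psi$-term is $\bigo(\eps^{1/(p+1)})$ as well; this proves \eqref{eq:Ip}.

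From now on $p=1$, so the prefactor is $\sqrt\eps$ and the only non-integrable piece is $-\tfrac{\sqrt\eps}{2}\int_{r_1}^{r_2}(r(1+r))^{-1/2}\,\psi(r,v^\eps(s_0\pm\eps r))\,dr$. I would first replace $(r(1+r))^{-1/2}$ by $(1+r)^{-1}$: their difference behaves like $r^{-1/2}$ near $r=0$ and like $r^{-2}$ as $r\to\infty$, hence is integrable on $\R_+$, so this replacement costs only $\bigo(\sqrt\eps)$. Then I would split $\psi=\langle\psi\rangle+\widetilde\psi$ with $\widetilde\psi$ of zero average and remove the oscillatory part $\int_{r_1}^{r_2}(1+r)^{-1}\widetilde\psi(r,v^\eps(s_0\pm\eps r))\,dr$ by an integration by parts in $r$, using the bounded $2\pi$-periodic primitive $R(r,u)=\int_0^r\widetilde\psi(\varrho,u)\,d\varrho$ (which has bounded $\partial_2 R$ since $\psi$ is smooth). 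This leaves a bounded boundary term, the convergent integral $\int(1+r)^{-2}R\,dr$, and a term containing $\partial_2 R(r,v^\eps(\cdot))\,\tfrac{d}{dr}v^\eps(s_0\pm\eps r)$; since $|\tfrac{d}{dr}v^\eps(s_0\pm\eps r)|\le C\sqrt\eps\,r^{-1/2}$ by \eqref{eq:diffs} and $\int_0^{+\infty}r^{-1/2}(1+r)^{-1}\,dr<\infty$, this term is $\bigo(\sqrt\eps)$, so the whole oscillatory contribution to $\mathcal I$ is $\bigo(\sqrt\eps)$.

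It then remains to treat $-\tfrac{\sqrt\eps}{2}\int_{r_1}^{r_2}(1+r)^{-1}\langle\psi\rangle(v^\eps(s_0\pm\eps r))\,dr$. Here I would freeze $v^\eps$ at its value at the endpoint where $r$ is smallest, namely $v^\eps(b)$ in \eqref{eq:I1} (where $(r_1,r_2)=((s_0-b)/\eps,\,s_0/\eps)$) and $v^\eps(s_0)$ in \eqref{eq:I1bis} (where $(r_1,r_2)=(0,\,(b-s_0)/\eps)$). Using the H\"older bound $|v^\eps(\sigma)-v^\eps(\sigma')|\le C\,\big||\sigma-s_0|^{1/2}-|\sigma'-s_0|^{1/2}\big|\le C|\sigma-s_0|^{1/2}$ for $\sigma,\sigma'$ on the same side of $s_0$ (immediate from \eqref{eq:diffs}), the error induced by this freezing is bounded by $C\sqrt\eps\int_0^{S/\eps}\sqrt r\,(1+r)^{-1}\,dr\le C\sqrt\eps\int_0^{S/\eps}r^{-1/2}\,dr=\bigo(1)$, hence $\bigo(\sqrt\eps)$ after the prefactor. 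Finally $\int_{r_1}^{r_2}(1+r)^{-1}\,dr=\log((1+r_2)/(1+r_1))$ equals $\log((\eps+s_0)/(\eps+s_0-b))$ in the first case and $\log((\eps+b-s_0)/\eps)$ in the second; multiplying by $-\tfrac{\sqrt\eps}{2}$ and the relevant value of $\langle\psi\rangle$ yields exactly \eqref{eq:I1} and \eqref{eq:I1bis}.

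I expect the main obstacle to be the borderline exponent $p=1$: there one has to extract the correct logarithmic coefficient while simultaneously justifying that $v^\eps$ may be replaced by a single value --- despite being only H\"older-$\tfrac12$ continuous at $s_0$, the gain being that $\sqrt\eps\int_0^{S/\eps}r^{-1/2}\,dr$ stays bounded uniformly in $\eps$ --- and controlling the residual oscillations uniformly in $\eps$ down to $r=0$, where the rescaled kernel is singular. For $p\ge2$ the argument collapses to plain absolute-value estimates on convergent integrals.
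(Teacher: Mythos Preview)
Your proof is correct and follows essentially the same route as the paper: rescale to extract the factor $\eps^{1/(p+1)}$, split $\phi$ into the remainder $\rho$ and the leading $-\psi/(1+\sigma)^{p/(p+1)}$ part, bound the remainder by absolute convergence, and for the $\psi$-part separate the oscillatory from the averaged contribution, then freeze $v^\eps$ to extract the logarithm when $p=1$. The only cosmetic differences are that (i) for the oscillatory piece you integrate by parts using the bounded periodic primitive $R$ of $\widetilde\psi$, whereas the paper uses an $\Omega$-type antiderivative $\kappa(s,u)=\int_s^\infty(\sigma(1+\sigma))^{-p/(p+1)}\widetilde\psi(\sigma,u)\,d\sigma$, and (ii) for $p\ge 2$ you bound the $\psi$-term directly by the convergence of $\int_0^\infty r^{-p/(p+1)}(1+r)^{-p/(p+1)}\,dr$, which is slightly more streamlined than the paper's four-term decomposition $\mathcal J_1,\ldots,\mathcal J_4$.
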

\begin{proof}
Consider $0 \leq b \leq s_0$. A change of variables allows to write $\mathcal I(0,b)$ as
$$
\mathcal I(0,b) = \frac{\eps^{\frac{1}{p+1}}}{p+1} \int^{\frac{s_0}{\eps}}_{\frac{s_0-b}{\eps}} \frac{1}{\sigma^\frac{p}{p+1}}\phi \left(\sigma,v^\eps(s_0-\eps \sigma)\right) d\sigma.
$$
Now, we split  $(p+1) \eps^{\frac{-1}{p+1}} \mathcal I(0,b)=\mathcal J_2+\mathcal J_3+\mathcal J_4-\mathcal J_1$ into the sum of the four terms 
\begin{align*}
\mathcal J_2=&\int^{\frac{s_0}{\eps}}_{\frac{s_0-b}{\eps}}  \left(\frac{1}{(1+\sigma)^{\frac{2p}{p+1}}} - \frac{1}{(\sigma(1+\sigma))^{\frac{p}{p+1}}}  \right) \langle \psi \rangle \left(v^\eps(s_0-\eps \sigma)\right)  d\sigma,\\
\mathcal J_3=&\int^{\frac{s_0}{\eps}}_{\frac{s_0-b}{\eps}}  \frac{1}{ (\sigma(1+\sigma))^{\frac{p}{p+1}}} \left( \langle \psi \rangle -\psi \right) \left(\sigma,v^\eps(s_0-\eps \sigma)\right)  d\sigma,\\
\mathcal J_4=& \int^{\frac{s_0}{\eps}}_{\frac{s_0-b}{\eps}}  \frac{1}{\sigma^\frac{p}{p+1}} r(\sigma,v^{\eps}(s_0-\eps \sigma))d\sigma, \quad 
\mathcal J_1=\int^{\frac{s_0}{\eps}}_{\frac{s_0-b}{\eps}}  \frac{1}{(1+\sigma)^{\frac{2p}{p+1}}}  \langle \psi \rangle \left(v^\eps(s_0-\eps \sigma)\right) d\sigma,
\end{align*}
where we have denoted $r(\sigma,u) = \phi(\sigma, u) + \frac{\psi(\sigma,u)}{(1+\sigma)^\frac{p}{p+1}}$. 
Owing to assumption \eqref{eq:ass} and 
$$
\frac{1}{(1+\sigma)^{\frac{2p}{p+1}}} - \frac{1}{\sigma^\frac{p}{p+1} (1+\sigma)^{\frac{p}{p+1}}} \sim-\frac{p}{p+1}\frac{1}{\sigma^{\frac{3p+1}{p+1}}},
$$ 
integrals $\mathcal J_2$ and $\mathcal J_4$  are absolutely convergent and bounded. As for $\mathcal J_3$, we use the relation
\begin{align*}
-\frac{(\psi-\langle \psi \rangle)(\sigma,v^\eps(s_0-\eps \sigma))}{\sigma^\frac{p}{p+1}(1+\sigma)^\frac{p}{p+1}}  
=\frac{d}{d\sigma}\left(\kappa\left(\sigma,v^\eps(s_0-\eps \sigma)\right)\right)+\frac{\eps^\frac{1}{p+1}}{(p+1)\sigma^\frac{p}{p+1}}(\partial_2 \kappa \, F_{-\mu}  )\left(\sigma,v^\eps(s_0-\eps \sigma)\right)
\end{align*}
where we have taken equation \eqref{eq:diffs} into account with $\mu_s=\mu=(-1)^{p}$ and 
$$
\kappa(s,u)=\int_s^{+\infty}\frac{(\psi-\langle \psi \rangle)(\sigma,u)}{\sigma^\frac{p}{p+1}(1+\sigma)^{\frac{p}{p+1}}}d\sigma,
$$
in order to write $\mathcal J_3$ as 
\begin{align*}
\mathcal J_3=
\kappa\left(\frac{s_0}{\eps},v^\eps(0)\right)-\kappa\left(\frac{s_0-b}{\eps},v^\eps(b)\right)+\frac{\eps^\frac{1}{p+1}}{(p+1)}\int^{\frac{s_0}{\eps}}_{\frac{s_0-b}{\eps}} \frac{1}{\sigma^\frac{p}{p+1}}(\partial_2 \kappa \, F_{-\mu} )\left(\sigma,v^\eps(s_0-\eps \sigma)\right)d\sigma
\end{align*}
from which we may prove that $\mathcal J_3$ is bounded (note indeed that $\partial_2 \kappa \, F_{-\mu}$ is bounded). For $p>1$ it is clear that $\mathcal J_1$ is bounded owing to \eqref{eq:ass} and finally, that $\mathcal I(0,b)$ is bounded. The contribution of $\mathcal J_1$ for $p=1$ is more intricate and requires to be decomposed  as follows
\begin{align*}
\mathcal J_1&=\int^{\frac{s_0}{\eps}}_{\frac{s_0-b}{\eps}} \frac{1}{1+\sigma} \langle \psi \rangle (v^\eps(b))d\sigma + \int^{\frac{s_0}{\eps}}_{\frac{s_0-b}{\eps}}\frac{1}{1+\sigma} \Big(\langle \psi \rangle(v^\eps(s_0-\eps \sigma)) - \langle \psi \rangle(v^\eps(b))\Big)d\sigma\\
&=\log\left(\frac{s_0+\eps}{\eps+s_0-b}\right) \langle \psi \rangle(v^\eps(b))+\int^{\frac{s_0}{\eps}}_{\frac{s_0-b}{\eps}}\frac{1}{1+\sigma}\left(\langle \psi \rangle(v^\eps(s_0-\eps\sigma))-\langle \psi \rangle(v^\eps(b))\right)d\sigma.
\end{align*}
To estimate the second term, we use \eqref{eq:diffs} and $s_0-\eps \sigma \leq b \leq s_0$ to get 
\begin{align*}
\Big|\left[\langle \psi \rangle(v^\eps(\tau))\right]^{s_0-\eps \sigma}_b \Big|&\leq \left|\int_{s_0-\eps \sigma}^{b}\frac{1}{2\sqrt{s_0-\tau}}\left(\langle \partial_2 \psi \rangle \, F_{\mu} \right) (\frac{\tau-s_0}{\eps}, v^\eps(\tau)) d\tau\right|\leq C \sqrt{\eps \sigma}
\end{align*}
so that 
$$\left|\int^{\frac{s_0}{\eps}}_{\frac{s_0-b}{\eps}}\frac{\left(\langle \psi \rangle(v^\eps(s_0-\eps\sigma))-\langle \psi \rangle(v^\eps(b))\right)}{1+\sigma}d\sigma\right| \leq C \sqrt{\eps} \int^{\frac{s_0}{\eps}}_{0} \frac{\sqrt{\sigma}}{(1+\sigma)} d \sigma \leq C \sqrt{s_0}. 
$$
We finally obtain that
$$\mathcal I(0,b)=\frac{\sqrt{\eps}}{2} \, \log\left(\frac{\eps+s_0-b}{s_0+\eps}\right) \,  \langle \psi \rangle(v^\eps(b))+\mathcal O(\sqrt{\eps}).$$
{\it Mutatis mutandis}, a similar conclusion holds true for the case $a=s_0$ and $b \geq s_0$ as can be seen by writing the new value of $\mathcal J_1$ as 
\begin{align*}
\int_{0}^{\frac{b-s_0}{\eps}}  \frac{\langle \psi \rangle (v^\eps(s_0))+\langle \psi \rangle(v^\eps(s_0+\eps \sigma)) - \langle \psi \rangle(v^\eps(s_0))}{1+\sigma}  =\log\left(1+\frac{b-s_0}{\eps}\right) \langle \psi \rangle(v^\eps(s_0))+\mathcal O(1).
\end{align*}
\end{proof}
\subsection{The averaged model}
\label{sect:averaged}
We are now in position to state the first averaging estimate, from which Theorem \ref{thm:main} follows by considering the change of variable $\Gamma$. 
\begin{proposition} 
\label{prop1}Let $v^\eps$ be the solution of  problem \eqref{eq:diffs} on $[0,S]$, under Assumption \ref{hyp1}. Then, for all $0 < \eps < \eps_0$ where $\eps_0$ depends only on bounds on the derivatives of $F$,  the solution $\underline{v}^{\eps}$ of the averaged model \eqref{averagedmodel} exists on $[0,S]$ and one has
\begin{equation}\label{cv}
\forall s\in [0,S], \quad |v^\eps(s)-\underline{v}^\eps(s)|\leq C\,\eps^\frac{1}{p+1}.
\end{equation}
\end{proposition}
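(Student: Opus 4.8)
The plan is to follow the classical strategy of averaging: introduce a near-identity change of variables that removes the oscillatory part of the vector field to leading order, show that the transformed variable satisfies the averaged equation up to a small remainder, and then close a Gronwall-type estimate. Concretely, I would set
$$
w^\eps(s) = v^\eps(s) + \frac{\eps^{\frac{p}{p+1}}}{?}\,\Omega_{\mu_s}\!\left(\frac{|s-s_0|}{\eps},v^\eps(s)\right),
$$
that is, correct $v^\eps$ by a term built from the function $\Omega_\nu$ defined in \eqref{defOmega}, suitably rescaled so that $\Omega_\mu(|s-s_0|/\eps,\cdot)$ captures the antiderivative of the oscillatory remainder $\frac{1}{(p+1)|s-s_0|^{p/(p+1)}}(F_{\mu_s}-\langle F\rangle)$ appearing in \eqref{eq:diffs}. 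By Lemma \ref{lemOmega}, this correction is $O(\eps^{\frac{p}{p+1}})$ via \eqref{est1} near $s=s_0$ and decays like $(|s-s_0|/\eps)^{-p/(p+1)}$ away from $s_0$ via \eqref{est2}, so $|w^\eps(s)-v^\eps(s)|\le C\eps^{\frac{1}{p+1}}$ uniformly on $[0,S]$ (the worst case being $s$ within $O(\eps)$ of $s_0$). It therefore suffices to prove \eqref{cv} with $v^\eps$ replaced by $w^\eps$.

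Next I would differentiate $w^\eps$ and compute $\dot w^\eps - \frac{1}{(p+1)|s-s_0|^{p/(p+1)}}\langle F\rangle(w^\eps)$. The leading oscillatory term cancels by construction of $\Omega$; what remains are three kinds of error terms: (i) the Lipschitz discrepancy $\langle F\rangle(w^\eps)-\langle F\rangle(v^\eps)$, which is $O(|w^\eps-v^\eps|)=O(\eps^{\frac{1}{p+1}})$; (ii) the term coming from $\partial_2\Omega_{\mu_s}$ acting on $\dot v^\eps$, which is $\frac{\eps^{\frac{p}{p+1}}}{(p+1)|s-s_0|^{p/(p+1)}}\,\partial_2\Omega_{\mu_s}\big(\tfrac{|s-s_0|}{\eps},v^\eps\big)\,F_{\mu_s}\big(\tfrac{|s-s_0|}{\eps},v^\eps\big)$; and (iii) the second-order correction built from $H_\nu$ via the refined estimate \eqref{est3}. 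For (ii) and (iii) the key point is that these are precisely of the form $\frac{1}{(p+1)|s-s_0|^{p/(p+1)}}\phi(\tfrac{|s-s_0|}{\eps},v^\eps)$ with $\phi$ satisfying the hypotheses \eqref{eq:ass} of Lemma \ref{lem:integralI} — indeed \eqref{est2}--\eqref{est3} give exactly the decay $|\phi|\le C(1+\sigma)^{-1}$ with the $\psi$-correction absorbing the borderline rate. Hence integrating these terms in $s$ produces, via Lemma \ref{lem:integralI}, a contribution bounded by $C\eps^{\frac{1}{p+1}}$ when $p\ge 2$, and by $C\sqrt\eps\,|\log\eps|$ when $p=1$; since $|\log\eps|$ is absorbed into the generic constant up to shrinking $\eps_0$ (or more carefully, since the logarithmic factor multiplies $\sqrt\eps$ which still goes to zero and can be bounded by $C\eps^{1/2-\delta}$; here it actually suffices to note $\sqrt\eps|\log\eps|\le C\eps^{1/(p+1)}=C\sqrt\eps$ fails, so one keeps track that the $\log$-terms in \eqref{eq:I1}--\eqref{eq:I1bis} are the \emph{exact} integrals and the genuine errors are $O(\sqrt\eps)$), one obtains in all cases an integral remainder of size $O(\eps^{\frac{1}{p+1}})$.

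Writing $e^\eps(s)=w^\eps(s)-\underline v^\eps(s)$, the two previous steps yield an integral inequality of the form
$$
|e^\eps(s)| \le C\eps^{\frac{1}{p+1}} + \int_0^s \frac{C}{(p+1)|\sigma-s_0|^{p/(p+1)}}\,|e^\eps(\sigma)|\,d\sigma,
$$
where the non-integrable-looking kernel is in fact integrable since $p/(p+1)<1$, with $\int_0^S|\sigma-s_0|^{-p/(p+1)}d\sigma<\infty$. A Gronwall argument then gives $|e^\eps(s)|\le C\eps^{\frac{1}{p+1}}$ on $[0,S]$, and combining with $|w^\eps-v^\eps|\le C\eps^{\frac{1}{p+1}}$ proves \eqref{cv}; the restriction $\eps<\eps_0$ is used only to ensure $w^\eps$ stays within the ball of radius $2M$ where the estimates on $F,G_\nu,H_\nu,\Omega_\nu$ hold, and to justify existence of $\underline v^\eps$ on all of $[0,S]$ via a standard bootstrap. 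The main obstacle is the careful bookkeeping near $s=s_0$: the change of variables $\Gamma$ has a singular derivative there, the correction $\Omega$ is only $O(1)$ (not small) in a $O(\eps)$-neighbourhood, and the error terms (ii)--(iii) must be matched exactly to the template of Lemma \ref{lem:integralI} — in particular identifying the right $\psi$ so that \eqref{eq:ass} holds is the crux, and this is exactly why the auxiliary functions $G_\nu,H_\nu$ and the two successive integrations by parts in Lemma \ref{lemOmega} were set up.
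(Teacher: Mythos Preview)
Your strategy is the paper's own --- use $\Omega_\nu$ as an antiderivative of the oscillatory remainder --- just phrased as a near-identity change of variables rather than as a direct estimate of the remainder $R^\eps$ in the integral formulation \eqref{duhamel1}. Two things are off, though.

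First, the power of $\eps$ in the correction must be $\eps^{1/(p+1)}$, not $\eps^{p/(p+1)}$. Differentiating $\eps^\alpha\Omega_\nu(|s-s_0|/\eps, v^\eps)$ in $s$, the $\partial_1\Omega_\nu$ contribution is (up to sign) $\eps^{\alpha-1/(p+1)}|s-s_0|^{-p/(p+1)}(F_\nu-\langle F\rangle)$, so $\alpha=1/(p+1)$ is forced if this is to cancel the oscillatory part of \eqref{eq:diffs}; with $\alpha=p/(p+1)$ nothing cancels for $p\ge 2$. Once $\alpha=1/(p+1)$, estimate \eqref{est1} alone gives $|w^\eps-v^\eps|\le C\eps^{1/(p+1)}$ uniformly, and your two-regime discussion (near/far from $s_0$) is unnecessary.

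Second, and more to the point, you are working much too hard on the residual term (ii). The paper does not invoke Lemma \ref{lem:integralI} here at all: after the integration by parts one has (see \eqref{R1bis}, \eqref{R2bis})
$$
R^\eps(s) = \frac{\eps^{1/(p+1)}}{p+1}\Bigl[\Omega_\nu\Bigr]_0^s \mp \frac{\eps^{1/(p+1)}}{(p+1)^2}\int\frac{(\partial_2\Omega_\nu)\, F_\nu}{|\sigma-s_0|^{p/(p+1)}}\,d\sigma,
$$
and the integral is bounded by a constant simply because $|\partial_2\Omega_\nu|\le C$ from \eqref{est1} and $\int_0^S|\sigma-s_0|^{-p/(p+1)}d\sigma<\infty$. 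That yields $|R^\eps|\le C\eps^{1/(p+1)}$ in one line, with no case distinction on $p$ and no logarithms. What Lemma \ref{lem:integralI} would buy you is that this integral is itself $O(\eps^{1/(p+1)})$ (modulo a log for $p=1$), making the residual $O(\eps^{2/(p+1)})$ --- precisely the refinement the paper saves for Proposition \ref{prop2}. Your tangled paragraph about $\sqrt\eps\,|\log\eps|$ is a symptom of deploying that machinery one order too early; it evaporates once you use the crude bound. There is also no separate ``term (iii)'' built from $H_\nu$: estimate \eqref{est3} plays no role in this proposition.
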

\begin{proof} The integral formulation of equation (\ref{eq:diffs}) reads
\begin{equation}\label{duhamel1}
v^\eps(s)=v_0^\eps+\frac{1}{p+1}\int_{0}^s\frac{1}{|\sigma-s_0|^\frac{p}{p+1}}\left\langle F\right\rangle (v^\eps(\sigma))d\sigma+R^\eps(s),
\end{equation}
where (with  $\mu_\sigma = \sign(\sigma-s_0)^p$)
\begin{align}
\label{defR}
R^\eps(s)&=\frac{1}{p+1}\int_{0}^s\frac{1}{|\sigma-s_0|^\frac{p}{p+1}}\left(F_{\mu_\sigma}\left(\frac{\sigma-s_0}{\eps},v^\eps(\sigma)\right)-\left\langle F\right\rangle (v^\eps(\sigma))\right)d\sigma,
\end{align} 
which is well-defined for all $s \in [0,S]$. From (\ref{defOmega}) with $\s= \sign(\sigma-s_0)$, $\sigma \neq s_0$, we have 
\begin{align*}
\frac{d}{d\sigma}\Omega_\nu\left(\frac{|\sigma-s_0|}{\eps},v^{\eps}(\sigma)\right) &= \frac{\s}{\eps} (\partial_1 \Omega_\nu) \left(\frac{|\sigma-s_0|}{\eps},v^{\eps}(\sigma)\right) + (\partial_2 \Omega_\nu) \left(\frac{|\sigma-s_0|}{\eps},v^{\eps}(\sigma)\right) \; \dot v^{\eps}(\sigma) \\
&=  -\frac{\s}{\eps^{\frac{1}{p+1}} |\sigma-s_0|^{\frac{p}{p+1}}}\left(F_{\s \nu}\left(\frac{\sigma-s_0}{\eps},v^{\eps}(\sigma)\right)-\left\langle F\right\rangle (v^\eps(\sigma))\right)  \\
&+\frac{1}{(p+1)|\sigma-s_0|^\frac{p}{p+1}} (\partial_2 \Omega_\nu) \left(\frac{|\sigma-s_0|}{\eps},v^{\eps}(\sigma)\right) \; F_{\mu_\sigma} \left(\frac{\sigma-s_0}{\eps},v^\eps(\sigma)\right),
\end{align*}
that is to say, taking $\nu=\s \mu_\sigma$
\begin{align}
&\frac{1}{|\sigma-s_0|^\frac{p}{p+1}}\left(F_{\mu_\sigma}\left(\frac{\sigma-s_0}{\eps},v^\eps(\sigma)\right)-\left\langle F\right\rangle (v^\eps(\sigma))\right)=-\s\eps^{\frac{1}{p+1}} \frac{d}{d\sigma}\left(\Omega_{\s \mu_\sigma}\left(\frac{|\sigma-s_0|}{\eps},v^\eps(\sigma)\right)\right)\label{R1}\\
&\qquad\qquad \qquad \qquad+  \frac{\s \eps^{\frac{1}{p+1}}}{(p+1)|\sigma-s_0|^\frac{p}{p+1}}\partial_2 \Omega_{\s \mu_\sigma}\left(\frac{|\sigma-s_0|}{\eps},v^\eps(\sigma)\right)F_{\mu_\sigma}\left(\frac{\sigma-s_0}{\eps},v^\eps(\sigma)\right),\nonumber
\end{align}
where we have used \eqref{eq:diffs}. For $\sigma \leq s \leq s_0$ we have $\mu_\sigma=(-1)^p=\mu_s$, $\s=-1$ and therefore
\begin{align} 
R^\eps(s)=&\frac{\eps^{\frac{1}{p+1}}}{p+1} \left(\Omega_{-\mu_s}\left(\frac{s_0-s}{\eps},v^\eps(s)\right)-\Omega_{-\mu_s}\left(\frac{s_0}{\eps},v_0^\eps\right)\right)\label{R1bis}\\
&-\frac{\eps^{\frac{1}{p+1}}}{(p+1)^2}\int_{0}^s\frac{1}{(s_0-\sigma)^\frac{p}{p+1}}\partial_2 \Omega_{-\mu_s}\left(\frac{s_0-\sigma}{\eps},v^\eps(\sigma)\right)F_{-\mu_s}\left(\frac{s_0-\sigma}{\eps},v^\eps(\sigma)\right)d\sigma\nonumber
\end{align}
a relation from which  we may deduce, using \eqref{est1} and Assumption \ref{hyp1},  that $|R^\eps(s)|\leq C\eps^{1/(p+1)}$. In particular, $|R^\eps(s_0)|\leq C\eps^{1/(p+1)}$. As for $s \geq s_0$, we  have  $\mu_\sigma = \s = 1$ and thus
\begin{align} 
R^\eps(s)=& R^\eps(s_0) + \frac{\eps^{\frac{1}{p+1}}}{p+1} \left(\Omega_{1}\left(0,v^\eps(s_0)\right)-\Omega_{1}\left(\frac{s-s_0}{\eps},v^\eps(s)\right)\right)\label{R2bis}\\
&+\frac{\eps^{\frac{1}{p+1}}}{(p+1)^2}\int_{s_0}^s\frac{1}{(\sigma-s_0)^\frac{p}{p+1}}\partial_2 \Omega_{1} \left(\frac{\sigma-s_0}{\eps},v^\eps(\sigma)\right)F_{1} \left(\frac{\sigma-s_0}{\eps},v^\eps(\sigma)\right)d\sigma\nonumber
\end{align}
and we may again conclude from \eqref{est1} and Assumption \ref{hyp1} that $|R^\eps(s)|\leq C\eps^{\frac{1}{p+1}}$ for $s_0 \leq s \leq S$ and eventually for all $0 \leq s \leq S$. Finally, we have on the one hand, 
\begin{equation*}
v^\eps(s)=v_0^{\eps}+\frac{1}{p+1}\int_{0}^s\frac{1}{|\sigma-s_0|^{\frac{p}{p+1}}}\left\langle F\right\rangle (v^\eps(\sigma))d\sigma+ \mathcal O(\eps^{\frac{1}{p+1}}),
\end{equation*}
and on the other hand, 
$$
\underline{v}^\eps(s)=v_0^\eps+\frac{1}{p+1}\int_{0}^s\frac{1}{|\sigma-s_0|^{\frac{p}{p+1}}}\left\langle F\right\rangle (\underline{v}^{\eps}(\sigma))d\sigma,
$$
as long as the solution of \eqref{averagedmodel} exists. Assumption \ref{hyp1} and a standard bootstrap argument based on the Gronwall lemma then enable to conclude.
\end{proof}
\subsection{Next term of the asymptotic expansion}
\label{subsec-next}

This section now presents how the estimate of Proposition \ref{prop1} (analogously Theorem \ref{thm:main}) 
can be refined by introducing an additional term in the asymptotic expansion.

\begin{proposition}
\label{prop2} Let
$\mu=(-1)^p$, and $\delta_{p}=1$ if $p=1$, $\delta_{p}=0$ otherwise. Under Assumption \ref{hyp1}, if we consider the solutions $\bar{v}^\eps$ and $\bar{w}^\eps$ of the averaged equation \eqref{averagedmodel} respectively on $[0,s_0]$ and $[s_0,S]$ and with the respective initial conditions 
\begin{align}
&\bar{v}^\eps(0)=v_0^\eps -  \frac{\eps^{\frac{1}{p+1}}}{p+1} \Omega_{-\mu}\left(\frac{s_0}{\eps},v_0^\eps\right) 
, \label{eq:moy1} \\
&\bar w^\eps(s_0) = \bar{v}^\eps(s_0)+\frac{\eps^{\frac{1}{p+1}}}{p+1} \Big( \Omega_1\left(0,\bar v^\eps(s_0)\right)+ \Omega_{-\mu} \left(0,\bar v^\eps(s_0)\right) \Big)- \frac{\delta_{p} \eps}{4}\log\left(\frac{\eps}{\eps+s_0}\right)\left\langle \partial_2 G \, F \right \rangle(\bar{v}^{\eps}(s_0)), \nonumber \label{eq:moy2}
\end{align}
and $\tilde v^\eps$ the {\em continuous} function defined by the following expressions:
\begin{align*}
s\leq s_0, \; & \widetilde{v}^\eps(s)=\bar{v}^\eps(s)+\frac{\eps^{\frac{1}{p+1}}}{p+1}\Omega_{-\mu} \left(\frac{s_0-s}{\eps},\bar v^\eps(s)\right) -  \frac{\delta_{p} \eps}{4}\log\left(\frac{\eps+s_0-s}{\eps+s_0}\right)\left\langle \partial_2 G F\right\rangle(\bar v^\eps(s)), \\
s_0 \leq s, \; & \widetilde{v}^\eps(s)=\bar{w}^\eps(s)-\frac{\eps^{\frac{1}{p+1}}}{p+1}\Omega_{1} \left(\frac{s-s_0}{\eps},\bar w^\eps(s)\right)+ \frac{\delta_{p} \eps}{4}\log\left(\frac{\eps+s-s_0}{\eps}\right)\left\langle \partial_2 G F\right\rangle(\bar w^\eps(s_0))
+  \beta^\eps
\end{align*}
where 
$$
\beta^\eps=\frac{\eps^{\frac{1}{p+1}}}{p+1}\Omega_1\left(0,\bar w^\eps(s_0)\right)-\frac{\eps^{\frac{1}{p+1}}}{p+1} \Omega_1 \left(0,\bar v^\eps(s_0)\right),
$$
then we have
\begin{equation}\label{cv2}
\forall s \in [0,S], \quad |v^\eps(s)-\widetilde v^\eps(s)|\leq C\,\eps^{\frac{2}{p+1}}.
\end{equation}
\end{proposition}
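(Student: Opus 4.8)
The plan is to refine the proof of Proposition~\ref{prop1} by one order, using not merely the size $\mathcal{O}(\eps^{1/(p+1)})$ of the remainder $R^\eps$ but the \emph{explicit} expressions \eqref{R1bis}--\eqref{R2bis} obtained there. Set $e^\eps(s)=v^\eps(s)-\widetilde v^\eps(s)$. On $[0,s_0]$ I would insert \eqref{duhamel1} for $v^\eps$, the formula \eqref{R1bis} for $R^\eps$, and the definition of $\widetilde v^\eps$ into $e^\eps$. The initial condition \eqref{eq:moy1} is chosen precisely so that the boundary term $\Omega_{-\mu}(s_0/\eps,v_0^\eps)$ appearing in $R^\eps$ cancels, while the oscillatory corrector $\frac{\eps^{1/(p+1)}}{p+1}\Omega_{-\mu}(\frac{s_0-s}{\eps},\bar v^\eps(s))$ in $\widetilde v^\eps$ cancels the other boundary term $\frac{\eps^{1/(p+1)}}{p+1}\Omega_{-\mu}(\frac{s_0-s}{\eps},v^\eps(s))$ of \eqref{R1bis} up to a Lipschitz defect which, by \eqref{est1}, is $\mathcal{O}(\eps^{1/(p+1)}|e^\eps(s)|)+\mathcal{O}(\eps^{2/(p+1)})$. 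After these cancellations the only genuinely new ingredients in the integral identity for $e^\eps$ are (i) the singular-kernel term $\frac{1}{p+1}\int_0^s |\sigma-s_0|^{-p/(p+1)}(\langle F\rangle(v^\eps)-\langle F\rangle(\bar v^\eps))\,d\sigma$, and (ii) the ``remainder of the remainder'' integral $-\frac{\eps^{1/(p+1)}}{(p+1)^2}\int_0^s (s_0-\sigma)^{-p/(p+1)}\,\partial_2\Omega_{-\mu}(\frac{s_0-\sigma}{\eps},v^\eps(\sigma))\,F_{-\mu}(\frac{s_0-\sigma}{\eps},v^\eps(\sigma))\,d\sigma$ coming from \eqref{R1bis}.

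For (i), a first-order Taylor expansion of $\langle F\rangle$ about $\bar v^\eps$ splits it into the linear term $\frac{1}{p+1}\int_0^s|\sigma-s_0|^{-p/(p+1)}\partial_2\langle F\rangle(\bar v^\eps)\,e^\eps\,d\sigma$, which becomes the Gronwall kernel, a term involving $\widetilde v^\eps-\bar v^\eps$, and a quadratic remainder $\mathcal{O}(|e^\eps|^2+\eps^{2/(p+1)})$ that is absorbed because $|e^\eps|\le C\eps^{1/(p+1)}$ on all of $[0,S]$ already follows from Proposition~\ref{prop1} and the boundedness of the correctors. Since $\widetilde v^\eps-\bar v^\eps=\frac{\eps^{1/(p+1)}}{p+1}\Omega_{-\mu}(\cdots)-\frac{\delta_p\eps}{4}\log(\cdots)\langle\partial_2 G\,F\rangle(\bar v^\eps)$, the $\Omega$-part of the middle term is of the type treated by Lemma~\ref{lem:integralI} with $\phi=\partial_2\langle F\rangle\,\Omega_{-\mu}$, $\psi=\partial_2\langle F\rangle\,G_{-\mu}$; here the crucial algebraic fact is $\langle\psi\rangle=\partial_2\langle F\rangle\,\langle G_{-\mu}\rangle=0$ because $G_{-\mu}$ has zero average, so even the logarithmic output of \eqref{eq:I1} disappears and this contributes $\mathcal{O}(\eps^{2/(p+1)})$ for every $p$, while the $\delta_p$-part contributes $\mathcal{O}(\eps)$ since $\int_0^{s_0}(s_0-\sigma)^{-1/2}|\log\frac{\eps+s_0-\sigma}{\eps+s_0}|\,d\sigma=\mathcal{O}(1)$. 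Term (ii) is itself of the form $\pm\frac{\eps^{1/(p+1)}}{p+1}\mathcal I(0,s)$ (in the notation of Lemma~\ref{lem:integralI}), with $\phi=\partial_2\Omega_{-\mu}\,F_{-\mu}$ and $\psi=\partial_2 G_{-\mu}\,F_{-\mu}$, whose hypotheses \eqref{eq:ass} I would check from \eqref{est0} and the sharp expansion \eqref{est3} (replacing $\sigma^{p/(p+1)}$ by $(1+\sigma)^{p/(p+1)}$ as in Remark~\ref{rem:splusun}). For $p\ge 2$ Lemma~\ref{lem:integralI} gives $\mathcal{O}(\eps^{2/(p+1)})$, so (ii) contributes $\mathcal{O}(\eps^{2/(p+1)})$ and there is no logarithmic corrector; for $p=1$ it produces a term of size $\eps\log(1/\eps)$ proportional to $\langle\partial_2 G\,F\rangle(v^\eps(s))$, and this is precisely what the $\log$-corrector in $\widetilde v^\eps$ is designed to cancel --- its coefficient and the argument of the $\log$ being chosen to match the output of \eqref{eq:I1}, so that only the difference $\frac{\eps}{4}\log(\cdots)(\langle\partial_2 G\,F\rangle(\bar v^\eps(s))-\langle\partial_2 G\,F\rangle(v^\eps(s)))=\mathcal{O}(\eps^{3/2}|\log\eps|)=\mathcal{O}(\eps)$ survives. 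Gathering all terms, $e^\eps$ satisfies $|e^\eps(s)|\le o(1)|e^\eps(s)|+C\int_0^s|\sigma-s_0|^{-p/(p+1)}|e^\eps(\sigma)|\,d\sigma+C\eps^{2/(p+1)}$ on $[0,s_0]$; since $\sigma\mapsto|\sigma-s_0|^{-p/(p+1)}\in L^1(0,S)$, the (singular) Gronwall lemma yields \eqref{cv2} on $[0,s_0]$, in particular $|e^\eps(s_0)|\le C\eps^{2/(p+1)}$.

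On $[s_0,S]$ I would run the same argument on the increment $e^\eps(s)-e^\eps(s_0)$, using \eqref{R2bis} for $R^\eps(s)-R^\eps(s_0)$ so that $R^\eps(s_0)$ itself drops out. First, a short computation using the initial condition \eqref{eq:moy2} for $\bar w^\eps$ together with the constant $\beta^\eps$ shows that $\widetilde v^\eps$ is continuous at $s_0$ (both one-sided values equal $\bar v^\eps(s_0)+\tfrac{\eps^{1/(p+1)}}{p+1}\Omega_{-\mu}(0,\bar v^\eps(s_0))-\tfrac{\delta_p\eps}{4}\log(\tfrac{\eps}{\eps+s_0})\langle\partial_2 G\,F\rangle(\bar v^\eps(s_0))$); moreover $\beta^\eps=\tfrac{\eps^{1/(p+1)}}{p+1}(\Omega_1(0,\bar w^\eps(s_0))-\Omega_1(0,\bar v^\eps(s_0)))$ is in fact $\mathcal{O}(\eps^{2/(p+1)})$, being a difference of $\Omega_1(0,\cdot)$ at two $\mathcal{O}(\eps^{1/(p+1)})$-close arguments. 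After that the analysis is \emph{mutatis mutandis}: the boundary terms $\Omega_1(0,\cdot)$ and $\Omega_1(\frac{s-s_0}{\eps},\cdot)$ give $\mathcal{O}(\eps^{2/(p+1)})$ Lipschitz defects once $|e^\eps(s_0)|\le C\eps^{2/(p+1)}$ is known, the $\partial_2\langle F\rangle\,\Omega_1$ contribution again vanishes to leading order because $\langle G_1\rangle=0$, the remainder integral of \eqref{R2bis} is handled by Lemma~\ref{lem:integralI} (for $p=1$ via \eqref{eq:I1bis}, the corresponding $\log$-corrector in $\widetilde v^\eps$ cancelling its logarithmic output up to $\mathcal{O}(\eps)$), and a final singular Gronwall on $[s_0,S]$ with ``initial datum'' $e^\eps(s_0)=\mathcal{O}(\eps^{2/(p+1)})$ produces \eqref{cv2} on all of $[0,S]$. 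The degenerate cases $t_0=0$ and $t_0=T$, where one of the two intervals is trivial, are covered by the same formulas.

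The main obstacle is the bookkeeping of the logarithmic terms when $p=1$: one must verify that every contribution of size $\eps\log(1/\eps)$ is \emph{either} cancelled by the dedicated $\log$-corrector in $\widetilde v^\eps$ --- which forces one to pin down exactly the coefficient and the argument of the logarithm produced by \eqref{eq:I1}--\eqref{eq:I1bis} --- \emph{or} appears multiplied by an $\mathcal{O}(\eps^{1/2})$ Lipschitz factor, which demotes it to $\mathcal{O}(\eps^{3/2}|\log\eps|)=\mathcal{O}(\eps)=\mathcal{O}(\eps^{2/(p+1)})$; and one must spot the vanishing averages $\langle G_\nu\rangle=0$ (hence $\langle\partial_2\langle F\rangle\,G_\nu\rangle=0$) that kill the otherwise dangerous $\eps\log(1/\eps)$ pieces arising from the Taylor-expanded source in (i). A secondary technical point is the verification of hypotheses \eqref{eq:ass} of Lemma~\ref{lem:integralI} for the products $\partial_2\Omega_\nu F_\nu$ and $\partial_2\langle F\rangle\,\Omega_\nu$, which combines \eqref{est3} with the comparison $\sigma^{p/(p+1)}\sim(1+\sigma)^{p/(p+1)}$ of Remark~\ref{rem:splusun}; the routine Lipschitz replacements of $\bar v^\eps,\bar w^\eps$ by $v^\eps$ inside the integrands cost only $\mathcal{O}(\eps^{3/(p+1)}|\log\eps|^{\delta_p})$. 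Everything else is the singular-Gronwall machinery already used in the proof of Proposition~\ref{prop1}.
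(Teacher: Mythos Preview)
Your proposal is correct and follows essentially the same approach as the paper. The paper packages the argument by introducing auxiliary functions $V^\eps(s)=v^\eps(s)-(\widetilde v^\eps(s)-\bar v^\eps(s))$ and $W^\eps(s)=v^\eps(s)-(\widetilde v^\eps(s)-\bar w^\eps(s))$ and applies Gronwall to $V^\eps-\bar v^\eps$ and $W^\eps-\bar w^\eps$, which are exactly your $e^\eps$; the key steps---applying Lemma~\ref{lem:integralI} with $\phi=\partial_2\Omega_\nu F_\nu$, $\psi=\partial_2G_\nu F_\nu$ to the integral remainder, a second application with $\phi=\langle\partial_2F\rangle\Omega_\nu$, $\psi=\langle\partial_2F\rangle G_\nu$ where the crucial identity $\langle\psi\rangle=\langle\partial_2F\rangle\langle G_\nu\rangle=0$ kills the logarithm, and the final Gronwall closure---are identical to yours.
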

\begin{proof}
In order to refine estimates (\ref{R1bis}) and (\ref{R2bis}) of $R^\eps(s)$ obtained in the proof of Proposition \ref{prop1}, we rewrite them as 
\begin{align}
s \leq s_0: R^\eps(s)&=\frac{\eps^{\frac{1}{p+1}}}{p+1} \left(\Omega_{-\mu}\left(\frac{s_0-s}{\eps},v^\eps(s)\right)-\Omega_{-\mu}\left(\frac{s_0}{\eps},v_0^\eps\right)
 -  \mathcal{I}_{-\mu}(0,s)\right) , \label{eq:Rpur1}\\ 
s \geq s_0: R^\eps(s)&= R^\eps(s_0) + \frac{\eps^{\frac{1}{p+1}}}{p+1} \left(\Omega_{1}\left(0,v^\eps(s_0)\right)-\Omega_{1}\left(\frac{s-s_0}{\eps},v^\eps(s)\right) +  \mathcal{I}_1(s_0,s) \right), \label{eq:Rpur2}
\end{align}
where the expression of $\mathcal I_{\nu}$  coincides with $\mathcal I$ in Lemma \ref{lem:integralI} for $\phi(\sigma,u) = \partial_2 \Omega_{\nu} F_{\nu}(\sigma,u)$ and $\psi(\sigma,u) = \partial_2 G_{\nu} F_{\nu}(\sigma,u)$. 
If $x$ and $\underline x$ differ by an $\mathcal O(\eps^\frac{1}{p+1})$, then, using (\ref{est1})-(\ref{est2}), one has 
\begin{align*}
 \forall \nu=\pm 1,  \quad \left|\Omega_\nu\left(\frac{s}{\eps},x \right)-\Omega_\nu\left(\frac{s}{\eps},\underline x\right)\right| \leq C\eps^\frac{1}{p+1} 
\end{align*}
and owing to \eqref{cv}, estimates $\bar v^\eps(0)-\underline v^\eps(0) = \mathcal O(\eps^\frac{1}{p+1})$ and  $\bar w^\eps(s_0)-\bar v^\eps(s_0) = \mathcal O(\eps^\frac{1}{p+1})$, and the Gronwall lemma, it stems that 
$$
\forall 0 \leq s \leq s_0, \quad v^\eps(s)-\bar v^\eps(s)=\mathcal O(\eps^\frac{1}{p+1})
\quad \mbox{ and } \quad 
\forall s_0 \leq s \leq S, \quad \bar w^\eps(s)-v^\eps(s) = 
\mathcal O(\eps^\frac{1}{p+1})
$$ 
so that $v^\eps(s)$ can be replaced by $\bar v^\eps(s)$ in \eqref{eq:Rpur1} and by $\bar w^\eps(s)$ in \eqref{eq:Rpur2}, up to $\mathcal O(\eps^\frac{2}{p+1})$-terms. \\ \\
\medskip
{\bf Case $p>1$:} Lemma \ref{lem:integralI} shows that the terms  $\frac{\eps^\frac{1}{p+1}}{p+1} \mathcal I_\nu$ in \eqref{eq:Rpur1} and \eqref{eq:Rpur2} are of order $\mathcal O(\eps^\frac{2}{p+1})$,
we thus have for $s \leq s_0$
\begin{align*}
v^\eps(s)=v_0^\eps+\frac{1}{p+1}\int_{0}^s\frac{\left\langle F\right\rangle (v^\eps(\sigma))}{|\sigma-s_0|^\frac{p}{p+1}}d\sigma 
+\frac{\eps^\frac{1}{p+1}}{p+1} \left[\Omega_{-\mu}
\left(\frac{s_0-\sigma}{\eps},\bar v^\eps(\sigma)\right)\right]_{\sigma=0}^{\sigma=s}  + \mathcal O(\eps^\frac{2}{p+1}),
\end{align*}
that is to say, by denoting $V^\eps(s) =v^\eps(s)-\frac{\eps^\frac{1}{p+1}}{p+1}\Omega_{-\mu}
\left(\frac{s_0-s}{\eps},\bar v^\eps(s)\right)$, 
the equation
\begin{align*}
V^\eps(s)-V^\eps(0)&=\frac{1}{p+1}\int_{0}^s\frac{\left\langle F\right\rangle \Big(V^\eps(\sigma)+(v^\eps(\sigma)-V^\eps(\sigma))\Big)}{(s_0-\sigma)^\frac{p}{p+1}}d\sigma +\mathcal O(\eps^\frac{2}{p+1})\\
&=\frac{1}{p+1} \int_{0}^s\frac{\left\langle F\right\rangle \big(V^\eps(\sigma)\big)d\sigma + \left\langle \partial_2 F\right\rangle(V^\eps(\sigma)) \, (v^\eps(\sigma)-V^\eps(\sigma))}{(s_0-\sigma)^\frac{p}{p+1}} d\sigma+\mathcal O(\eps^\frac{2}{p+1}), \\
&=\frac{1}{p+1} \int_{0}^s\frac{1}{(s_0-\sigma)^\frac{p}{p+1}}\left\langle F\right\rangle \big(V^\eps(\sigma)\big)d\sigma +\mathcal O(\eps^\frac{2}{p+1}),
\end{align*}
where we have used Remark \ref{rem:splusun} to get the bound 
$$
\int_{0}^s \left|\frac{1}{(s_0-\sigma)^\frac{p}{p+1}} \left\langle  \partial_2 F\right\rangle(V^\eps(\sigma)) \Omega_{-\mu} 
\left(\frac{s_0-\sigma}{\eps},\bar v^\eps(\sigma)\right)\right| d\sigma \leq C \eps^\frac{1}{p+1} \int_{0}^{+\infty} \frac{1}{(\sigma(1+\sigma))^\frac{p}{p+1}}  d\sigma.
$$
From $V^\eps(0)-\bar v^\eps(0) = {\cal O}(\eps^{\frac{2}{p+1}})$ and equation \eqref{averagedmodel}, we obtain by the Gronwall lemma 
$$
\forall s \leq s_0, \quad \left|\tilde v^\eps(s)-v^{\eps}(s)\right| = 
\left|V^\eps(s)-\bar v^\eps(s)\right|\leq C\eps^{\frac{2}{p+1}}.
$$
As for $s \geq s_0$,  we write 
\begin{align*}
v^\eps(s)&
= v^\eps(s_0) +\frac{1}{p+1}\int_{s_0}^s\frac{\left\langle F\right\rangle (v^\eps(\sigma))}{(\sigma-s_0)^\frac{p}{p+1}}d\sigma + (R^\eps(s)-R^\eps(s_0)) \\
&=v^\eps(s_0) + \frac{1}{p+1}\int_{s_0}^s\frac{\left\langle F\right\rangle (v^\eps(\sigma))}{(\sigma-s_0)^\frac{p}{p+1}}d\sigma - \frac{\eps^{\frac{1}{p+1}}}{p+1} \left[\Omega_{1}\left(\frac{\sigma-s_0}{\eps},\bar w^\eps(\sigma)\right)\right]_{\sigma=s_0}^{\sigma=s} + \mathcal O(\eps^\frac{2}{p+1}) 
\end{align*}
that is to say, by denoting $W^\eps(s) = v^\eps(s) + \frac{\eps^\frac{1}{p+1}}{p+1}\Omega_{1}
\left(\frac{s-s_0}{\eps},\bar w^\eps(s)\right)$, 
the simple equation
\begin{align*}
W^\eps(s)
&= W^\eps(s_0) +\frac{1}{p+1}\int_{s_0}^s\frac{\left\langle F\right\rangle \Big(W^\eps(\sigma))\Big)}{(\sigma-s_0)^\frac{p}{p+1}}d\sigma +\mathcal O(\eps^\frac{2}{p+1})
\end{align*}
and by comparing with equation \eqref{averagedmodel}, the Gronwall lemma enables to conclude that $W^\eps(s)-\bar w^\eps(s) = \mathcal O(\eps^\frac{2}{p+1})$ given that $W^\eps(s_0)- \bar w^\eps(s_0) = \mathcal O(\eps^\frac{2}{p+1})$ (by definition of $\bar w^\eps(s_0)$ and $W^\eps(s_0)$ and estimate \eqref{cv2} for $s=s_0$). The statement for $s \geq s_0$ now follows from  $\beta^\eps=\mathcal O(\eps^\frac{2}{p+1})$. \\ \\
\medskip
{\bf Case $p=1$:} 
This case differs in that  the terms $\frac{\sqrt{\eps}}{2} \mathcal I_\nu$ in \eqref{eq:Rpur1} and \eqref{eq:Rpur2} are now of order $\eps \log(\eps)$ for $s$ close to $s_0$. This yields for $s \leq s_0$
\begin{align*}
v^\eps(s)=v_0^\eps+\frac{1}{2}\int_{0}^s\frac{\left\langle F\right\rangle (v^\eps(\sigma))}{\sqrt{s_0-\sigma}}d\sigma 
+\frac{\sqrt{\eps}}{2}\Omega_{-\mu}
\left[\left(\frac{s_0-\sigma}{\eps},\bar v^\eps(\sigma)\right)\right]_{\sigma=0}^{\sigma=s}  - \frac{\sqrt{\eps}}{2} \mathcal{I}_{-\mu}(0,s) + \mathcal O(\eps),
\end{align*}
that is to say, by denoting 
$$
V^\eps(s) =v^\eps(s)-\frac{\sqrt{\eps}}{2}\Omega_{-\mu}
\left(\frac{s_0-s}{\eps},\bar v^\eps(s)\right)+ \frac{\eps}{4}\log\left(\frac{\eps+s_0-s}{\eps+s_0}\right)\left\langle  \partial_2 G \, F\right\rangle(\bar v^\eps(s)),
$$
the equation
\begin{align*}
V^\eps(s)&=V_0^\eps + \int_{0}^s\frac{\left\langle F\right\rangle \big(V^\eps(\sigma)\big)}{2 \sqrt{s_0-\sigma}}d\sigma + \int_{0}^s\frac{\left\langle \partial_2 F\right\rangle(V^\eps(\sigma)) }{2 \sqrt{s_0-\sigma}}\, (v^\eps(\sigma)-V^\eps(\sigma)) d\sigma+\mathcal O(\eps) \\
&=V_0^\eps + \int_{0}^s\frac{\left\langle F\right\rangle \big(V^\eps(\sigma)\big)}{2 \sqrt{s_0-\sigma}}d\sigma + \frac{\sqrt{\eps}}{4} \int_{0}^s\frac{\left\langle \partial_2 F\right\rangle(V^\eps(\sigma)) }{\sqrt{s_0-\sigma}}\,\Omega_{-\mu}
\left(\frac{s_0-\sigma}{\eps},\bar v^\eps(\sigma)\right) d\sigma\\
&  - \frac{\eps}{8} \int_{0}^s\frac{\log\left(\frac{\eps+s_0-\sigma}{\eps+s_0}\right)}{\sqrt{s_0-\sigma}}\,\left\langle \partial_2 F\right\rangle(V^\eps(\sigma))  \left\langle  \partial_2 G \, F\right\rangle(\bar v^\eps(\sigma)) + \mathcal O(\eps) \\
& = V_0^\eps + \int_{0}^s\frac{\left\langle F\right\rangle \big(V^\eps(\sigma)\big)}{2 \sqrt{s_0-\sigma}}d\sigma + \mathcal O(\eps),
\end{align*}
where we have used Lemma \ref{lem:integralI} again now with 
$\phi(\sigma,u) = \langle \partial_2 F  \rangle(u) \, \Omega_{-\mu}
\left(\sigma,u\right)$ and $\psi(\sigma,u) = \langle \partial_2 F  \rangle(u) \, G_{-\mu} \left(\sigma,u\right)$, and noticed that 
$\langle \psi \rangle = \langle \partial_2 F  \rangle \, \langle G_{-\mu} \rangle = 0$, to get rid of the second term of the second line. The third term may be bounded through an integartion by parts. We again conclude by the Gronwall lemma. As for $s \geq s_0$, we get 
\begin{align*}
v^\eps(s)=v^\eps(s_0)+\frac{1}{2}\int_{s_0}^s\frac{\left\langle F\right\rangle (v^\eps(\sigma))}{\sqrt{\sigma-s_0}}d\sigma 
-\frac{\sqrt{\eps}}{2}\left[\Omega_{1}
\left(\frac{\sigma-s_0}{\eps},\bar w^\eps(\sigma)\right)\right]_{\sigma=s_0}^{\sigma=s}  + \frac{\sqrt{\eps}}{2} \mathcal{I}_{1}(s_0,s) + \mathcal O(\eps),
\end{align*}
that is to say, by denoting 
$$
W^\eps(s) =v^\eps(s)+\frac{\sqrt{\eps}}{2}\Omega_{1}
\left(\frac{s-s_0}{\eps},\bar w^\eps(s)\right)- \frac{\eps}{4}\log\left(\frac{\eps}{\eps+s-s_0}\right)\left\langle  \partial_2 G \, F\right\rangle(\bar w^\eps(s_0)),
$$
the equation
\begin{align*}
W^\eps(s)&=W^\eps(s_0) + \int_{s_0}^s\frac{\left\langle F\right\rangle \big(W^\eps(\sigma)\big)}{2 \sqrt{\sigma-s_0}}d\sigma - \frac{\sqrt{\eps}}{4} \int_{s_0}^s\frac{\left\langle \partial_2 F\right\rangle(W^\eps(\sigma)) }{\sqrt{\sigma-s_0}}\,\Omega_{1}
\left(\frac{\sigma-s_0}{\eps},\bar w^\eps(\sigma)\right) d\sigma\\
&  + \frac{\eps}{8} \int_{s_0}^s\frac{\log\left(\frac{\eps}{\eps+s-s_0}\right)}{\sqrt{\sigma-s_0}}\,\left\langle \partial_2 F\right\rangle(W^\eps(\sigma))  \left\langle  \partial_2 G \, F\right\rangle(\bar w^\eps(s_0)) + \mathcal O(\eps) \\
& = W^\eps(s_0) + \int_{s_0}^s\frac{\left\langle F\right\rangle \big(W^\eps(\sigma)\big)}{2 \sqrt{\sigma-s_0}}d\sigma + \mathcal O(\eps),
\end{align*}
where we have used equation \eqref{eq:I1bis} of Lemma \ref{lem:integralI}, and we may conclude as before.
\end{proof}
\begin{corollary} \label{coro}
Let $\mu=(-1)^p$, $\delta_{p}=1$ if $p=1$, $\delta_{p}=0$ otherwise and $\tau_0=\frac{t_0}{\eps}$. Under Assumption \ref{hyp1},   consider $\bar{u}^\eps$,   the solution of 
\begin{align} \label{eq:master}
\dot{\bar{u}}^\eps(t) = \langle F \rangle (\bar{u}^\eps(t)), \quad t \in [0,t_0[ \, \cup \, [t_0,T],
\end{align}
with initial conditions $\bar{u}^\eps(0)=u_0^\eps -  \frac{\eps^{\frac{1}{p+1}}}{p+1} \Omega_{-\mu}\left(\frac{t_0^{p+1}}{\eps},u_0^\eps\right)$ and
$$
\bar u^\eps(t_0) = \bar u^\eps_{t_0}+\frac{\eps^{\frac{1}{p+1}}}{p+1} \Big( \Omega_1\left(0,\bar u^\eps_{t_0}\right)+ \Omega_{-\mu} \left(0,\bar u^\eps_{t_0}\right) \Big)+ \frac{\delta_{p} \eps}{4}\log\left(1+\tau_0\right)\left\langle \partial_2 G \, F \right \rangle(\bar u^\eps_{t_0})
$$
where $\bar u^\eps_{t_0}=\lim_{t \rightarrow t_0} \bar u^\eps(t)$ if $t_0>0$ and $\bar u^\eps_{t_0}=\bar{u}^\eps(0)$ if $t_0=0$. 
Then we have 
\begin{equation}\label{cvu}
\forall t \in [0,T], \quad |u^\eps(t)-\widetilde u^\eps(t)|\leq C\,\eps^{\frac{2}{p+1}}
\end{equation}
where $\tilde u^\eps$ is the continuous function defined by the following expressions:
\begin{align*}
0 \leq t \leq t_0: \quad & \widetilde{u}^\eps(t)=\bar{u}^\eps(t)+\frac{\eps^{\frac{1}{p+1}}}{p+1}\Omega_{-\mu} \left(\tau,\bar u^\eps(t)\right) -  \frac{\delta_{p} \eps}{4}\log\left(\frac{1+\tau}{1+\tau_0}\right)\left\langle \partial_2 G F\right\rangle(\bar u^\eps(t)), \\
t_0 \leq t \leq T: \quad & \widetilde{u}^\eps(t)=\bar{u}^\eps(t)-\frac{\eps^{\frac{1}{p+1}}}{p+1}\Omega_{1} \left(\tau,\bar u^\eps(t)\right)+ \frac{\delta_{p} \eps}{4}\log\left(1+\tau\right)\left\langle \partial_2 G F\right\rangle(\bar u^\eps(t_0)) +  \beta^\eps,
\end{align*}
with $\tau=\frac{|t-t_0|^{p+1}}{\eps}$ and $\beta^\eps=\frac{\eps^{\frac{1}{p+1}}}{p+1}\Omega_1\left(0,\bar u^\eps(t_0)\right)-\frac{\eps^{\frac{1}{p+1}}}{p+1} \Omega_1 \left(0,\bar u^\eps_{t_0}\right)$. 
\end{corollary}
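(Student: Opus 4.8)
The plan is to obtain Corollary \ref{coro} from Proposition \ref{prop2} by pulling everything back through the change of variable $s=\Gamma(t)$. Recall that $\Gamma:[0,T]\to[0,S]$ is a strictly increasing homeomorphism, smooth on $[0,t_0)$ and on $(t_0,T]$, with $\Gamma(0)=0$, $\Gamma(t_0)=s_0=t_0^{p+1}$, mapping $[0,t_0]$ onto $[0,s_0]$ and $[t_0,T]$ onto $[s_0,S]$, and that $v^\eps(\Gamma(t))=u^\eps(t)$. The first point is that this change of variable turns the averaged model \eqref{averagedmodel} into the reduced equation of \eqref{eq:master}: for $y$ equal to either of the solutions $\bar v^\eps$, $\bar w^\eps$ from Proposition \ref{prop2}, differentiating $t\mapsto y(\Gamma(t))$ away from $t_0$ and using $\Gamma'(t)=|\gamma(t)|=(p+1)|t-t_0|^p$ together with $|\Gamma(t)-s_0|=|t-t_0|^{p+1}$, hence $|\Gamma(t)-s_0|^{p/(p+1)}=|t-t_0|^p$, shows that $\Gamma'(t)$ exactly cancels the singular weight $\big((p+1)|\Gamma(t)-s_0|^{p/(p+1)}\big)^{-1}$ in \eqref{averagedmodel}, so that $z(t):=y(\Gamma(t))$ satisfies $\dot z=\langle F\rangle(z)$ there; since this limit stays finite as $t\to t_0$, the singularity is resolved and $z$ is in fact $C^1$ up to $t_0$. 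Consequently, setting $\bar u^\eps(t):=\bar v^\eps(\Gamma(t))$ on $[0,t_0]$ and $\bar u^\eps(t):=\bar w^\eps(\Gamma(t))$ on $[t_0,T]$, the function $\bar u^\eps$ solves \eqref{eq:master} on each of the two subintervals.

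Next I would check that the initial data prescribed in Corollary \ref{coro} are exactly the images under $s=\Gamma(t)$ of those in Proposition \ref{prop2}. At $t=0$ one has $\Gamma(0)=0$, hence $\bar u^\eps(0)=\bar v^\eps(0)$, and since $s_0/\eps=t_0^{p+1}/\eps$ this is precisely \eqref{eq:moy1}. At $t=t_0$ one has $\Gamma(t_0)=s_0$ and, by continuity of $\bar v^\eps$ on $[0,s_0]$, $\bar u^\eps_{t_0}=\lim_{t\to t_0}\bar v^\eps(\Gamma(t))=\bar v^\eps(s_0)$ (with the convention $\bar u^\eps_{t_0}=\bar u^\eps(0)$ when $t_0=0$, matching the one in Proposition \ref{prop2}); the relation defining $\bar w^\eps(s_0)$ then becomes, term for term, the relation defining $\bar u^\eps(t_0)$, once one notes that $-\log\big(\frac{\eps}{\eps+s_0}\big)=\log\big(1+\frac{s_0}{\eps}\big)=\log(1+\tau_0)$. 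Thus $\bar u^\eps$ of Corollary \ref{coro} is the piecewise composition of $\bar v^\eps$ and $\bar w^\eps$ with $\Gamma$.

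The core of the proof is then the term-by-term identification $\widetilde u^\eps=\widetilde v^\eps\circ\Gamma$. For $0\le t\le t_0$, with $s=\Gamma(t)\in[0,s_0]$, one has $s_0-s=|t-t_0|^{p+1}=\eps\tau$, so $\frac{s_0-s}{\eps}=\tau$, $\bar v^\eps(s)=\bar u^\eps(t)$, and $\frac{\eps+s_0-s}{\eps+s_0}=\frac{1+\tau}{1+\tau_0}$; inserting these in the first displayed formula for $\widetilde v^\eps$ in Proposition \ref{prop2} reproduces the first formula for $\widetilde u^\eps$. Likewise, for $t_0\le t\le T$, with $s=\Gamma(t)\in[s_0,S]$, one has $s-s_0=|t-t_0|^{p+1}=\eps\tau$, $\frac{\eps+s-s_0}{\eps}=1+\tau$, $\bar w^\eps(s)=\bar u^\eps(t)$, $\bar w^\eps(s_0)=\bar u^\eps(t_0)$, and $\beta^\eps$ is unchanged, so the second formula for $\widetilde v^\eps$ becomes the second formula for $\widetilde u^\eps$. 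Continuity of $\widetilde u^\eps$ at $t=t_0$ is inherited from that of $\widetilde v^\eps$ at $s_0$ (part of Proposition \ref{prop2}) and the continuity of $\Gamma$. Finally, for every $t\in[0,T]$,
\begin{equation*}
|u^\eps(t)-\widetilde u^\eps(t)|=\big|v^\eps(\Gamma(t))-\widetilde v^\eps(\Gamma(t))\big|\le C\,\eps^{\frac{2}{p+1}}
\end{equation*}
by Proposition \ref{prop2} at $s=\Gamma(t)\in[0,S]$, which is \eqref{cvu}.

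The only genuinely delicate part, in my view, is the sign and parity bookkeeping: verifying $\Gamma(t)-s_0=\mu_t(t-t_0)^{p+1}=\pm|t-t_0|^{p+1}$ with the correct sign on each side of $t_0$, and confirming that the index $-\mu=-(-1)^p$ carried by $\Omega_{-\mu}$ on $[0,s_0]$ is consistent with the choice $\nu=\varsigma\mu_\sigma$ made in the proofs of Propositions \ref{prop1}--\ref{prop2}. Once these are settled, what remains is a direct substitution: smoothness of $\Gamma$ away from $t_0$ justifies the chain rule, and the single point $t=t_0$ is handled by the already-established continuity, so no analytic estimate beyond Proposition \ref{prop2} is needed.
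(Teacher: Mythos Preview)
Your proposal is correct and is exactly the argument the paper has in mind: Corollary~\ref{coro} is stated without proof because it is obtained from Proposition~\ref{prop2} by the change of variable $s=\Gamma(t)$, and your term-by-term identification $\widetilde u^\eps=\widetilde v^\eps\circ\Gamma$, $\bar u^\eps=\bar v^\eps\circ\Gamma$ (resp.\ $\bar w^\eps\circ\Gamma$) together with the chain-rule computation for \eqref{averagedmodel} is precisely that translation. One small point worth flagging: your identity $\log(1+s_0/\eps)=\log(1+\tau_0)$ requires $\tau_0=s_0/\eps=t_0^{p+1}/\eps$, whereas the statement reads $\tau_0=t_0/\eps$; this is almost certainly a typo in the paper (note $\tau(0)=t_0^{p+1}/\eps$), and your computation is the consistent one.
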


\section{A micro-macro method for the case of a multiplicity $p=1$}
\label{sect-micmac}
In this section, 
we suggest a micro-macro decomposition analogous to the one introduced in \cite{CLM17}
and elaborated from the asymptotic analysis of Section \ref{sec-asympt}. In a second step, we propose a second-order {\it uniformly accurate} numerical method derived from this decomposition.
\subsection{The decomposition method}
\label{sect-decomp}
Let $u^\eps(t)$ be the solution of \eqref{eq-ueps} and let $\widetilde u^\eps(t)$ be the approximation defined in Corollary \ref{coro}, and consider the defect function
\begin{equation}\label{defz}
\Delta^\eps(t)=u^\eps(t)-\widetilde u^\eps(t),\qquad \mbox{for }t\in[0,T].\end{equation}
\begin{proposition}
\label{prop3} Assume that $f$ is of class $C^2$ and consider the solution $u^\eps(t)$ of \eqref{eq-ueps} on $[0,T]$.
The function $\Delta^\eps(t)$ defined by \eqref{defz} satisfies
\begin{equation}
\label{estiz}
\forall t\in[0,T],\qquad |\Delta^\eps(t)|\leq C\eps,\end{equation}
\begin{equation}
\label{estiz2}
\forall t\in[0,t_0[\cup]t_0,T],\qquad \left|\dot \Delta^\eps (t)\right|\leq C \sqrt{\eps},\qquad \left|\ddot \Delta^\eps (t)\right|\leq C.
\end{equation}
\end{proposition}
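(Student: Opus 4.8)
The first estimate \eqref{estiz} is nothing but Corollary \ref{coro} in the case $p=1$, for which $\eps^{2/(p+1)}=\eps$; so the real content is the pair of bounds \eqref{estiz2}, which I would obtain by differentiating $\Delta^\eps=u^\eps-\widetilde u^\eps$ once, resp.\ twice, and exhibiting the cancellation of the a priori large contributions. Throughout set $\tau=\tau(t)=(t-t_0)^2/\eps$, so that \eqref{eq-ueps} reads $\dot u^\eps(t)=F(\tau,u^\eps(t))$ and $\dot\tau=2(t-t_0)/\eps$; note that the $C^2$ hypothesis on $f$ is precisely what guarantees that $\partial_1 F$, $\partial_1\partial_2 F$, $\partial_2^2 F$ (and hence $\partial_1\partial_2\Omega_\nu$, $\partial_2^2\Omega_\nu$, \dots) are uniformly bounded on the relevant compact set, which is what makes $\ddot\Delta^\eps$ a legitimate object on $[0,t_0[\,\cup\,]t_0,T]$.

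\emph{First derivative.} From $\dot\Delta^\eps=F(\tau,u^\eps)-\dot{\widetilde u}^\eps$ and the closed-form expression for $\widetilde u^\eps$ in Corollary \ref{coro}, the only a priori large term produced by differentiating $\widetilde u^\eps$ is $\tfrac{d}{dt}\Omega_\nu(\tau,\bar u^\eps)$, which contains $\dot\tau\,(\partial_1\Omega_\nu)(\tau,\bar u^\eps)$. The key identity is $(\partial_1\Omega_\nu)(s,u)=-s^{-1/2}\bigl(F_\nu(s,u)-\langle F\rangle(u)\bigr)$ together with $\dot\tau\,\tau^{-1/2}=2\sign(t-t_0)/\sqrt\eps$, so that the $\pm\tfrac{\sqrt\eps}{2}$ prefactor of $\Omega_\nu$ in $\widetilde u^\eps$ turns this into $\bigl(F(\tau,\bar u^\eps)-\langle F\rangle(\bar u^\eps)\bigr)$, and adding $\dot{\bar u}^\eps=\langle F\rangle(\bar u^\eps)$ produces exactly $F(\tau,\bar u^\eps)$ on both sides of $t_0$. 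Hence $\dot\Delta^\eps=\bigl(F(\tau,u^\eps)-F(\tau,\bar u^\eps)\bigr)+r_1^\eps$, where $r_1^\eps$ gathers the terms carrying $\partial_2\Omega_\nu$ and the derivatives of the logarithmic corrector, with $|r_1^\eps|\le C\sqrt\eps$ by \eqref{est1} and because $\eps|\log(1+\tau)|$ and $\eps\,\dot\tau/(1+\tau)$ are $O(\sqrt\eps)$. Since $|u^\eps-\bar u^\eps|\le|\Delta^\eps|+|\widetilde u^\eps-\bar u^\eps|\le C\eps+C\sqrt\eps=O(\sqrt\eps)$ (using \eqref{estiz}, \eqref{est1}/Remark \ref{rem:splusun}, and again $\eps|\log(1+\tau)|=O(\sqrt\eps)$), the Lipschitz bound on $F$ yields $|\dot\Delta^\eps|\le C\sqrt\eps$.

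\emph{Second derivative.} Differentiating once more, $\ddot\Delta^\eps=\ddot u^\eps-\ddot{\widetilde u}^\eps$ with $\ddot u^\eps=\dot\tau\,(\partial_1 F)(\tau,u^\eps)+(\partial_2 F)(\tau,u^\eps)F(\tau,u^\eps)$. All the terms of $\ddot{\widetilde u}^\eps$ that do \emph{not} carry a factor $\dot\tau$, together with $(\partial_2 F)(\tau,u^\eps)F(\tau,u^\eps)$, are $O(1)$ by the same reasoning as above (using $|u^\eps-\bar u^\eps|=O(\sqrt\eps)$ and \eqref{est0bis}). For the $\dot\tau$-weighted contributions, the identities of the previous step show that the leading piece of $\ddot{\widetilde u}^\eps$ is $\dot\tau\,(\partial_1 F)(\tau,\bar u^\eps)$, which cancels $\dot\tau\,(\partial_1 F)(\tau,u^\eps)$ up to $\dot\tau\bigl[(\partial_1 F)(\tau,u^\eps)-(\partial_1 F)(\tau,\bar u^\eps)\bigr]$, bounded by $C\,|\dot\tau|\,|u^\eps-\bar u^\eps|$. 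A naive use of $|\dot\tau|\le C/\eps$ and $|u^\eps-\bar u^\eps|=O(\sqrt\eps)$ would only give $O(\eps^{-1/2})$; instead one splits $u^\eps-\bar u^\eps=\Delta^\eps+(\widetilde u^\eps-\bar u^\eps)$ and uses that \eqref{estiz} provides the \emph{sharper} bound $|\Delta^\eps|\le C\eps$, whence $|\dot\tau|\,|\Delta^\eps|\le C|t-t_0|\le C$, while the oscillatory corrector in $\widetilde u^\eps-\bar u^\eps$ is handled with the decay estimate $|\Omega_\nu(\tau,\cdot)|\le C(1+\tau)^{-1/2}$ of Remark \ref{rem:splusun}, which gives $|\dot\tau|\,\sqrt\eps\,|\Omega_\nu(\tau,\cdot)|\le C|t-t_0|\,(\eps+(t-t_0)^2)^{-1/2}\le C$, and the logarithmic corrector by careful bookkeeping based on its explicit form. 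Combining, $|\ddot\Delta^\eps|\le C$ on $[0,t_0[\,\cup\,]t_0,T]$; the exclusion of $t_0$ is forced by the fact that $\dot{\widetilde u}^\eps$, although continuous across $t_0$, is not $C^1$ there.

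\emph{Main obstacle.} The substantive part is the second-derivative estimate, and within it the term $\dot\tau\bigl[(\partial_1 F)(\tau,u^\eps)-(\partial_1 F)(\tau,\bar u^\eps)\bigr]$: one must simultaneously exploit the $O(\eps)$ (not merely $O(\sqrt\eps)$) size of $\Delta^\eps$, the $\tau$-decay of $\Omega_\nu$ from Lemma \ref{lemOmega}, and the precise structure of the logarithmic corrector of Corollary \ref{coro}, the last being the most delicate accounting. Everything else is a matter of differentiating the closed-form expression for $\widetilde u^\eps$ and repeatedly invoking the identity $\dot\tau\,(\partial_1\Omega_\nu)(\tau,\cdot)=-2\sign(t-t_0)\,\eps^{-1/2}\bigl(F_\nu(\tau,\cdot)-\langle F\rangle(\cdot)\bigr)$ together with the estimates of Lemma \ref{lemOmega}.
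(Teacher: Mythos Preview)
Your proposal is correct and follows essentially the same route as the paper's proof: both argue that \eqref{estiz} is immediate from Corollary~\ref{coro} with $p=1$, then differentiate the explicit formula for $\widetilde u^\eps$, exploit the identity $\partial_1\Omega_\nu(s,\cdot)=-s^{-1/2}(F_\nu(s,\cdot)-\langle F\rangle)$ to exhibit the cancellation producing $F(\tau,u^\eps)-F(\tau,\bar u^\eps)$, and for $\ddot\Delta^\eps$ isolate the critical term $\dot\tau\,[\partial_1 F(\tau,u^\eps)-\partial_1 F(\tau,\bar u^\eps)]$ and control it by splitting $u^\eps-\bar u^\eps=\Delta^\eps+(\widetilde u^\eps-\bar u^\eps)$, using the sharp bound $|\Delta^\eps|\le C\eps$ on the first piece and the $\tau$-decay of $\Omega_\nu$ on the second. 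The only place where you are slightly less explicit than the paper is the ``careful bookkeeping'' for the logarithmic corrector in the $\ddot\Delta^\eps$ estimate; the paper dispatches it via the elementary bound $\sqrt{\tau/\tau_0}\,|\log((1+\tau)/(1+\tau_0))|\le C$ on $[0,\tau_0]$, which is straightforward.
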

\begin{proof}
By construction, $\tilde u^\eps$ is continuous on $[0,T]$ and estimate \eqref{estiz} is nothing but \eqref{cvu}.  However, its derivatives are not continuous at $t_0$.  Hereafter, it is enough to consider $t$ in  $[0,t_0[$ as the same arguments can be repeated for values in $]t_0,T]$. From the expression of
\begin{align*}
\widetilde{u}^\eps(t)=\bar{u}^\eps(t)+\frac{\eps^{\frac{1}{2}}}{2}\Omega_{1} \left(\tau,\bar u^\eps(t)\right) -  \frac{ \eps}{4}\log\left(\frac{1+\tau(t)}{1+\tau_0}\right)\left\langle \partial_2 G F\right\rangle(\bar u^\eps(t)), \quad \tau(t) = \frac{(t-t_0)^2}{\eps},
\end{align*}
it stems by definition of $\Omega$ (see \eqref{defOmega}) that 
\begin{align}
\dot \Delta^\eps(t) =&\,F\left(\tau,u^\eps\right)-F\left(\tau,\bar u^\eps\right)-\frac{\sqrt{\eps}}{2} \partial_2 \Omega_1\left(\tau,\bar u^\eps\right)\langle F\rangle (\bar u^\eps)\nonumber\\
&-\frac{\sqrt{\eps}}{2} \frac{\sqrt{\tau}}{1+\tau}\left\langle \partial_2 G \, F\right\rangle(\bar u^\eps)+\frac{\eps}{4}\log\left(\frac{1+\tau}{1+\tau_0}\right) \frac{d}{dt}\Big(\left\langle \partial_2 G \, F\right\rangle(\bar u^\eps)\Big),\label{derz}
\end{align}
where we have omitted $t$ in $u^\eps(t)$, $\bar u^\eps(t)$ and $\tau(t)$. From Prop. \ref{prop1} and Eq. \eqref{est2}, we have
$$
\left|F\left(\tau,u^\eps\right)-F\left(\tau,\bar u^\eps\right)\right| \leq C \sqrt{\eps} \quad \mbox{ and } \quad \left|\frac{\sqrt{\eps}}{2}\partial_2 \Omega_1\left(\tau,\bar u^\eps\right)\langle F\rangle (\bar u^\eps)\right|\leq C \sqrt{\eps}.
$$
Besides, $2 \sqrt{\tau} \leq 1+\tau$, $|\eps \log \eps| \leq \sqrt{\eps}$, and the first estimate of \eqref{estiz2} is thus proven. Now, using again equations \eqref{eq-ueps} and \eqref{averagedmodel}, a second derivation leads to 
\begin{align*}
& \ddot \Delta^\eps (t)=-\frac{2 \sqrt{\tau}}{\sqrt{\eps}}\Big(\partial_1 F\left(\tau,u^\eps\right)-\partial_1 F\left(\tau,\bar u^\eps\right)\Big) +\partial_2F\left(\tau,u^\eps\right)F\left(\tau,u^\eps\right) \\
&-2 \partial_2F\left(\tau,\bar u^\eps\right)\langle F\rangle (\bar u^\eps) +\langle \partial_2F\rangle (\bar u^\eps)\langle F\rangle (\bar u^\eps) -\frac{\sqrt{\eps}}{2} \partial_u^2 \Omega_1 \left(\tau,\bar u^\eps\right) \Big( \langle F\rangle (\bar u^\eps),\langle F\rangle (\bar u^\eps)\Big) \\
& -\frac{\sqrt{\eps}}{2} \partial_2\Omega_1\left(\tau,\bar u^\eps\right)\langle \partial_2F\rangle (\bar u^\eps) \, \langle F\rangle (\bar u^\eps) + \frac{1-\tau}{2(1+\tau)^2}\left\langle \partial_2GF\right\rangle(\bar u^\eps) \\
&-\frac{\sqrt{\eps}}{2} \frac{\sqrt{\tau}}{1+\tau}\left\langle \partial_2 G \, F\right\rangle(\bar u^\eps)+\frac{\eps}{4}\log\left(\frac{1+\tau}{1+\tau_0}\right) \frac{d^2}{dt^2}\Big(\left\langle \partial_2 G \, F\right\rangle(\bar u^\eps)\Big).
\end{align*}
Thanks to Assumption \ref{hyp1}, Lemma \ref{lemOmega} and \eqref{averagedmodel}, all the terms are clearly uniformly bounded, except the critical one in the first line, which requires more attention. We get
\begin{align*}
\left|\frac{ \sqrt{\tau}}{\sqrt{\eps}}\Big(\partial_1 F\left(\tau,u^\eps\right)-\partial_1 F\left(\tau,\bar u^\eps\right)\Big)\right|\leq C\frac{|t-t_0|}{\eps}|u^\eps-\bar u^\eps|
\leq C\frac{|t-t_0|}{\eps}|\bar u^\eps-\widetilde u^\eps|+C,
\end{align*}
where we have used the result of Proposition \ref{prop2}, i.e. $|u^\eps-\widetilde u^\eps|\leq C \eps$.
It remains, using the expression of $\widetilde u^\eps$,  to observe that for $t \neq t_0$, $0 < \tau \leq \tau_0$ so that owing to \eqref{est2}, we obtain 
\begin{align*}
\frac{\sqrt{\tau}}{\sqrt{\eps}}|\widetilde u^\eps-\bar u^\eps|&\leq \frac{\sqrt{\tau}}{\sqrt{\eps}}\left(\frac{\sqrt{\eps}}{2}\left|\Omega_1\left(\tau,\bar u^\eps\right)\right|+\frac{\eps}{4}\left|\log\left(\frac{1+\tau}{1+\tau_0}\right)\right|\left|\left\langle \partial_2GF\right\rangle(\bar u^\eps(t_0))\right|\right)\\
&\leq C\frac{\sqrt{\tau}}{\sqrt{\eps}}\frac{\sqrt{\eps }}{\sqrt{\tau}}+C \sqrt{\frac{\tau}{\tau_0}} \left|\log\left(\frac{1+\tau}{1+\tau_0}\right)\right|\leq C.
\end{align*}
This completes the  proof.
\end{proof}
\subsection{A uniformly accurate second order numerical method}
We are now in position to introduce a uniformly accurate second-order numerical scheme.
Consider 
$0=t^{[0]}<\ldots <t^{[k]}<\cdots<t^{[N]}=T$ a subdivision of $[0,T]$ with $h=\max_{k=1,\ldots,N}(t^{[k]}-t^{[k-1]})$ and  assume that $t_0$ is one of the discretization points, i.e. $t_0=t^{[k_0]}$ for some $k_0$. Our scheme  provides  approximations $(\bar u^k,\Delta^k)$ of the pair $(\bar u^\eps(t^{[k]}),\Delta^\eps(t^{[k]}))$. An approximation $u^k$ of $u^\eps(t^{[k]})$ is then derived by assembling the approximation $\widetilde u^k$ of $\widetilde u^\eps(t^{[k]})$ from formulas in Corollary  \ref{coro} and eventually by setting $u^k=\widetilde u^k+\Delta^k$. Given that problem \eqref{eq:master} is nonstiff, any second-order numerical scheme is suitable for the computation of $\bar u^k$ and thus of $\widetilde u^k$, 
and we simply choose here the Heun method
$$
\bar u^{k+1} = \bar u^{k} + \frac h2 \langle F \rangle(\bar u^{k}) + \frac h2 \langle F \rangle
\left(\bar u^{k} + h \langle F \rangle(\bar u^{k})\right).
$$
As a consequence, we limit ourselves to the scheme for $\Delta^\eps$. Starting from
\begin{equation}
\label{eqzk}\Delta^\eps(t^{[k+1]})=\Delta^\eps(t^{[k]})+\int_{t^{[k]}}^{t^{[k+1]}} \hspace{-0.3cm} F\left(\tau(\xi),\widetilde u^\eps(\xi)+\Delta^\eps(\xi)\right)d\xi-(\widetilde u^\eps(t^{[k+1]})-\widetilde u^\eps(t^{[k]})), 
\end{equation}
where $\tau(\xi)=\frac{|\xi-t_0|^{p+1}}{\eps}$,
we consider at time $t^{[k+1/2]}=\frac{t^{[k]}+t^{[k+1]}}{2}$ the approximation
$$
\Delta^{k+\frac12}=\Delta^k+\int_{t^{[k]}}^{t^{[k+\frac12]}} \hspace{-0.3cm}F\left(\tau(\xi),\widetilde u^k+\Delta^k\right)d\xi-(\widetilde u^{k+\frac12}-\widetilde u^k).$$
Since the function $\widetilde u^\eps+\Delta^\eps=u^\eps$ has a bounded first time-derivative, the error associated to this scheme is of order $\mathcal O(h^2)$. Expanding $F$ in Fourier series, we see that the scheme necessitates the computation of integrals of terms of the form $e^{i\ell \xi^2}$ which may be easily computed numerically using the complex {\tt erf} function. Now, for $k<k_0$ and $t\leq t_0$, we identify the smooth part of $u^\eps(t)$ as
$$
a^\eps(t)=\bar u^\eps(t)+\Delta^\eps(t)-\frac{\eps}{4}\log\left(\frac{1+\tau}{1+\tau_0}\right)\left\langle \partial_2GF\right\rangle(\bar u^\eps(t)),$$
so that 
$$
u^\eps(t)=\widetilde u^\eps(t)+\Delta^\eps(t)=a^\eps(t)+\frac{\sqrt{\eps}}{2}\Omega_1\left(\tau(t),\bar u^\eps(t)\right)$$
and, by Proposition \ref{prop3} and its proof, it is clear that the second time-derivative of $a^\eps$ is uniformly bounded.
In order to approximate \eqref{eqzk}, we remark that
$$a^\eps(\xi)=a^k+\frac{a^{k+1/2}-a^k}{t^{[k+1/2]}-t^{[k]}}\left(\xi-t^{[k]}\right)+\mathcal O(h^2),$$
where setting $\bar u^{k+1/2} = \bar u^{k} + \frac h2 \langle F \rangle (\bar u^{k})$, we define
for $\tau^{[k+1/2]}=\tau(t^{[k+1/2]})$,
$$
a^{k+1/2}=\bar
u^{k+1/2}+\Delta^{k+1/2}-\frac{\eps}{4}\log\left(\frac{1+\tau^{[k+1/2]}}{1+\tau_0}\right)\left\langle
\partial_2GF\right\rangle(\bar u^{k+1/2}).
$$
Moreover, we have
$$\forall (s,\hat s) \in \R_+^2,\qquad\left|\Omega_1(s,\bar u^k)-\Omega_1(\hat s,\bar u^k)\right|=\left|\int_s^{\hat s}\frac{F(\sigma,\bar u^k)-\langle F\rangle(\bar u^k)}{\sqrt{\sigma}}d\sigma\right|\leq C|\sqrt{\hat s}-\sqrt{s}|$$
so that 
$$\left|\frac{\sqrt{\eps}}{2}\Omega_1\left(\tau(\xi),\bar u^k\right)-\frac{\sqrt{\eps}}{2}\Omega_1\left(\tau(t^{[k]}),\bar u^k\right)\right|\leq Ch$$
and
$$\frac{\sqrt{\eps}}{2}\Omega_1\left(\tau(\xi),\bar u^\eps(\xi)\right)=\frac{\sqrt{\eps}}{2}\Omega_1\left(\tau(\xi),\bar u^k\right)+\frac{\sqrt{\eps}}{2}\left(\xi-t^{[k]}\right)\partial_2\Omega_1\left(\tau(\xi),\bar u^k\right)\langle F\rangle (\bar u^k)+\mathcal O(h^2).$$
Therefore, denoting
$$b^k=a^k+\frac{\sqrt{\eps}}{2}\Omega_1\left(\tau(t^{[k]}),\bar u^k\right),$$
our numerical scheme takes the form
\begin{align*}
\Delta^{k+1}=&\Delta^k+\int_{t^{[k]}}^{t^{[k+1]}}F\left(\tau(\xi),b^k\right)d\xi+\int_{t^{[k]}}^{t^{[k+1]}}\left(\xi-t^{[k]}\right)\partial_2F\left(\tau(\xi),b^k\right)\frac{a^{k+1/2}-a^k}{t^{[k+1/2]}-t^{[k]}}d\xi\\
&+\int_{t^{[k]}}^{t^{[k+1]}}\frac{\sqrt{\eps}}{2}\left(\xi-t^{[k]}\right)\partial_2F\left(\tau(\xi),b^k\right)\partial_2\Omega_1\left(\tau(\xi),\bar u^k\right)\langle F\rangle (\bar u^k)d\xi\\
& +\int_{t^{[k]}}^{t^{[k+1]}}\frac{\sqrt{\eps}}{2}\partial_2F\left(\tau(\xi),b^k\right)
\left(\Omega_1\left(\tau(\xi),\bar u^k\right)-\Omega_1\left(\tau(t^{[k]}),\bar u^k\right)\right)d\xi+\widetilde u^\eps(t^{[k+1]})-\widetilde u^\eps(t^{[k]}),
\end{align*}
and has a truncation error of size $\mathcal O(h^3)$, uniformly in $\eps$. As for $k\geq k_0$, we have
\begin{align*}
a^k=\bar u^k+\Delta^k+\frac{\eps}{4}\log\left(\frac{1+\tau}{1+\tau_0}\right)\left\langle \partial_2GF\right\rangle(\bar u^\eps(t_0))+\beta^\eps, 
\quad b^k=a^k-\frac{\eps^{1/2}}{2}\Omega_1\left(\tau(t^k),\bar u^{k}\right),
\end{align*}
and 
\begin{align*}
\Delta^{k+1}=&\Delta^k+\int_{t^{[k]}}^{t^{[k+1]}}F\left(\tau(\xi),b^k\right)d\xi +\int_{t^{[k]}}^{t^{[k+1]}}\left(\xi-t^{[k]}\right)\partial_2F\left(\tau(\xi),b^k\right)\frac{a^{k+1/2}-a^k}{t^{[k+1/2]}-t^{[k]}}d\xi\\
&-\int_{t^{[k]}}^{t^{[k+1]}}\frac{\sqrt{\eps}}{2}\left(\xi-t^{[k]}\right)\partial_2F\left(\tau(\xi),b^k\right)\partial_2\Omega_1\left(\tau(t^{[k]}),\bar u^{k}\right)\langle F\rangle (\bar u^{k})d\xi\\
&-\int_{t^{[k]}}^{t^{[k+1]}}\frac{\sqrt{\eps}}{2}\partial_2F\left(\frac{\Gamma(\tau)}{\eps},b^k\right)\left(\Omega_1\left(\tau(\xi),\bar u^k\right)-\Omega_1\left(\tau(t^{[k]}),\bar u^{k}\right)\right)d\tau \\
&+\widetilde u^\eps(t^{[k+1]})-\widetilde u^\eps(t^{[k]}).
\end{align*}
According to the above computations, the uniform accuracy with second order of the proposed scheme may now be stated:
\begin{proposition} \label{propscheme}
Assume that $f$ is of class $C^2$. Consider the solution $u^\eps(t)$ of \eqref{eq-ueps}
on $[0,T]$, and the numerical scheme $(\widetilde u^k,\Delta^k)$ defined above.
Then $u^k=\widetilde u^k+\Delta^k$ yields a uniformly accurate approximation of the solution
$u^\eps(t_k)$. Precisely, there exist $\eps_0>0$ and $h_0>0$ such that for all $\eps\leq \eps_0$ and all $h\leq h_0$,
$$
|u^k - u^\eps(t^{[k]})| \leq C h^2
$$
for all $t^{[k]}\leq T$ and where $C$ is independent of $\eps$ and $h$.
\end{proposition}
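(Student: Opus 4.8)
The plan is to regard the scheme as a one-step method producing the pair $(\bar u^k,\Delta^k)$, to prove separately that $\bar u^k$ approximates $\bar u^\eps(t^{[k]})$ and $\Delta^k$ approximates $\Delta^\eps(t^{[k]})$ to second order uniformly in $\eps$, and then to conclude via $u^k=\widetilde u^k+\Delta^k$ together with the exact identity $u^\eps=\widetilde u^\eps+\Delta^\eps$ of \eqref{defz}. Since $t_0=t^{[k_0]}$ is a grid point and $\bar u^\eps$, $\widetilde u^\eps$, $\Delta^\eps$ are smooth on each of $[0,t_0]$ and $[t_0,T]$ (with only the prescribed jump of $\bar u^\eps$ and kink of $\widetilde u^\eps,\Delta^\eps$ at $t_0$), the two subintervals are treated independently, the passage through $t_0$ being dictated by the explicit formulas of Corollary \ref{coro}; these depend on their arguments in a Lipschitz fashion with constants that remain bounded as $\eps\to0$, the only $\eps$-dependent factors being $\eps^{1/2}\Omega_{\pm1}$, bounded by \eqref{est1}, and $\eps\log(1+\tau_0)=\mathcal O(\sqrt\eps)$. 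Throughout, $\eps_0$ is taken small enough that Propositions \ref{prop1}--\ref{prop3} apply, and $h_0$ small enough that the Heun stages and the frozen values $b^k$ stay in $\{|u|\le 2M\}$.

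For the macro component, equation \eqref{eq:master} is non-stiff: $\langle F\rangle$ and its derivatives are bounded uniformly in $\eps$ on $\{|u|\le 2M\}$. Hence the Heun method, applied on $[0,t_0]$ and on $[t_0,T]$ separately, has local error $\mathcal O(h^3)$ and, by the standard Lady Windermere argument, global error $\mathcal O(h^2)$, uniformly in $\eps$; the $\mathcal O(h^2)$ error of the approximation of $\bar u^\eps_{t_0}$ produced just before $t_0$ is transported to $\bar u^{k_0}$ through the Lipschitz transition formula of Corollary \ref{coro}, so that $|\bar u^k-\bar u^\eps(t^{[k]})|\le Ch^2$ for every $k$. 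Consequently, once the analogous estimate for $\Delta^k$ is established, the assembled quantities $a^k,b^k,\widetilde u^k$, obtained from $(\bar u^k,\Delta^k)$ by the bounded-Lipschitz formulas of Corollary \ref{coro} and Section \ref{sect-decomp}, inherit an $\mathcal O(h^2)$ error.

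For the micro component one must show that the $\Delta$-scheme reproduces the exact relation \eqref{eqzk} with a local defect of size $\mathcal O(h^3)$, uniformly in $\eps$. The key structural fact, for $t$ on either subinterval, is the decomposition $u^\eps(\xi)=a^\eps(\xi)\pm\frac{\sqrt\eps}{2}\Omega_1\!\left(\tau(\xi),\bar u^\eps(\xi)\right)$ in which $a^\eps$ has a second time-derivative bounded uniformly in $\eps$ — this is the content of the proof of Proposition \ref{prop3}, whose crucial ingredient was the estimate $\frac{\sqrt\tau}{\sqrt\eps}\,|\widetilde u^\eps-\bar u^\eps|\le C$ holding even near $t_0$ — while $\Omega_1,\partial_2\Omega_1$ are bounded and Lipschitz in $u$ by Lemma \ref{lemOmega} and $|\Omega_1(s,u)-\Omega_1(\hat s,u)|\le C|\sqrt s-\sqrt{\hat s}|$. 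Expanding $F(\tau(\xi),u^\eps(\xi))$ around the frozen value $b^\eps(t^{[k]}):=a^\eps(t^{[k]})\pm\frac{\sqrt\eps}{2}\Omega_1(\tau(t^{[k]}),\bar u^\eps(t^{[k]}))$, i.e. $F(\tau(\xi),u^\eps(\xi))=F(\tau(\xi),b^\eps(t^{[k]}))+\partial_2F(\tau(\xi),b^\eps(t^{[k]}))\,(u^\eps(\xi)-b^\eps(t^{[k]}))+\mathcal O(h^2)$, then Taylor-expanding the smooth parts $a^\eps(\xi)=a^\eps(t^{[k]})+(\xi-t^{[k]})\dot a^\eps(t^{[k]})+\mathcal O(h^2)$ and $\bar u^\eps(\xi)=\bar u^\eps(t^{[k]})+\mathcal O(h)$, and splitting the $\Omega_1$-increment into its $\tau$-variation (kept verbatim) and its $\bar u$-variation (linearised with $\partial_2\Omega_1$), one matches term by term the four integrals of the scheme — all of the form $\int_{t^{[k]}}^{t^{[k+1]}}(\text{polynomial in }\xi)\times(\text{function of }\tau(\xi))\,d\xi$, hence evaluated \emph{exactly} through the complex \texttt{erf} function and therefore free of any quadrature error — against the exact increment up to $\mathcal O(h^3)$. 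Here $\dot a^\eps(t^{[k]})$ is replaced by the divided difference $\frac{a^{k+1/2}-a^k}{t^{[k+1/2]}-t^{[k]}}$, whose $\mathcal O(h)$ error is absorbed by the factor $(\xi-t^{[k]})=\mathcal O(h)$ and the integration over an interval of length $\le h$, and $a^{k+1/2}$ is furnished by the midpoint predictor for $\Delta^{k+1/2}$ (itself $\mathcal O(h^2)$ because $u^\eps$ has bounded first derivative and $\Delta^k,\widetilde u^k$ carry $\mathcal O(h^2)$ errors by the induction hypothesis).

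Finally, the full increment map is Lipschitz in $(\bar u^k,\Delta^k)$ with a constant uniform in $\eps$ — using boundedness of $\partial_2F$, boundedness and Lipschitz continuity of $\Omega_1,\partial_2\Omega_1$, and boundedness of the $\eps\log$-factors — so a discrete Gronwall / Lady Windermere argument converts the $\mathcal O(h^3)$ local defects into $|\Delta^k-\Delta^\eps(t^{[k]})|\le Ch^2$. Combining with the macro estimate gives $|u^k-u^\eps(t^{[k]})|=\big|(\widetilde u^k-\widetilde u^\eps(t^{[k]}))+(\Delta^k-\Delta^\eps(t^{[k]}))\big|\le Ch^2$. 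I expect the main obstacle to be precisely the uniform-in-$\eps$ control of the local truncation error of the $\Delta$-scheme near $t=t_0$, where the oscillation degenerates and naive Filon-type reasoning breaks down; it is overcome by carrying out the expansion with the genuinely smooth part $a^\eps$ — whose second derivative is bounded there by Proposition \ref{prop3} — rather than with $u^\eps$ or $\widetilde u^\eps$ directly, and by keeping the mildly singular $\Omega_1$-contributions inside exactly evaluated integrals.
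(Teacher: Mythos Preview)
Your proposal is correct and follows essentially the same route as the paper: the paper's ``proof'' of Proposition \ref{propscheme} is in fact the scheme construction in Section \ref{sect-micmac} itself, which establishes the $\mathcal O(h^3)$ local truncation error via exactly the decomposition $u^\eps=a^\eps\pm\frac{\sqrt\eps}{2}\Omega_1$, the uniform bound on $\ddot a^\eps$ from Proposition \ref{prop3}, the $|\sqrt s-\sqrt{\hat s}|$-Lipschitz estimate on $\Omega_1$, and the exact evaluation of the oscillatory integrals. You have simply made explicit the stability/Lady Windermere step and the Lipschitz passage through $t_0$ via the formulas of Corollary \ref{coro}, which the paper leaves implicit in the phrase ``According to the above computations''.
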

\subsection{Numerical experiments}
For the sake of simplicity, we test our method on the H\'enon-Heiles system $U^\eps=(q_1,q_2,p_1,p_2)$ with a time-varying parameter $\gamma(t)=2(t-t_0)$,
$$ 
\dot U^\eps(t) = 
\left( \frac{\gamma(t)}{\eps} p_1, p_2, -\frac{\gamma(t)}{\eps}q_1-2 q_1 q_2, -q_2-q_1^2+q_2^2 \right), \; U^\eps(0)=(0.9,0.6,0.8,0.5).
$$
The associated filtered system, satisfied by the variable $u^\eps(t)\in \RR^4$ defined by
$$
u^\eps(t)=\left(\cos(\theta)q_1(t)-\sin(\theta) \, p_1(t),q_2(t),\sin(\theta) q_1(t)+\cos(\theta) p_1(t),p_2(t)\right),
$$
with $\theta=\frac{t_0^2+(t-t_0)|t-t_0|}{\eps}$, takes the form \eqref{eq-ueps} with
\begin{align*}
F_1(\theta,u) &= 2\sin \theta\left(u_1\cos\theta+u_3\sin\theta\right)u_2, \quad \; \; \; F_2(\theta,u) = u_4, \\
F_3(\theta,u) & = -2\cos \theta\left(u_1\cos\theta+u_3\sin\theta\right)u_2, \quad F_4(\theta,u) = -\left(u_1\cos\theta+u_3\sin\theta\right)^2+u_2^2-u_2).
\end{align*}
We consider a time interval of length $T=1$ and take $t_0=1/3$ as time where the oscillatory frequency vanishes.
The reference solution is obtained using the matlab {\tt ode45} routine with a tiny tolerance.
On Figure \ref{fig2}, we have represented the maximal error along the time interval
of the numerical solution. 
On the left picture,  the error is plot
as a function of the stepsize $h$, for fixed values $\eps\in\{2^{-k},\,k=0,\cdots, 11\}$, while on the right picture, the error is plot as a function of $\eps$, for fixed values $h\in\{0.1/2^{-k},\,k=0,\cdots, 9\}$. All curves are in perfect agreement with Proposition \ref{propscheme}.
\begin{figure}[H]
\centering
\includegraphics[width=0.52\textwidth]{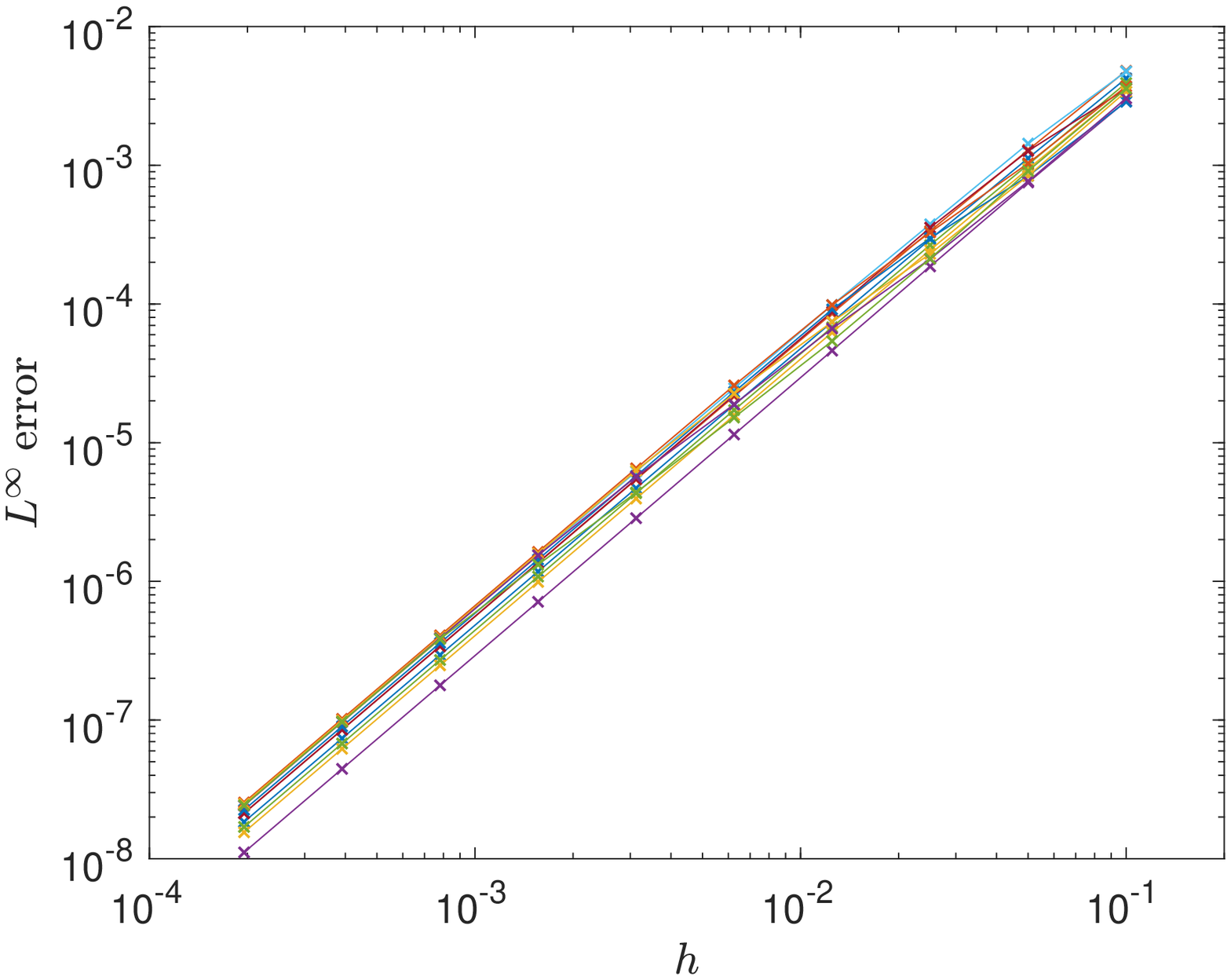}
\hspace{-0.9cm} \includegraphics[width=0.52\textwidth]{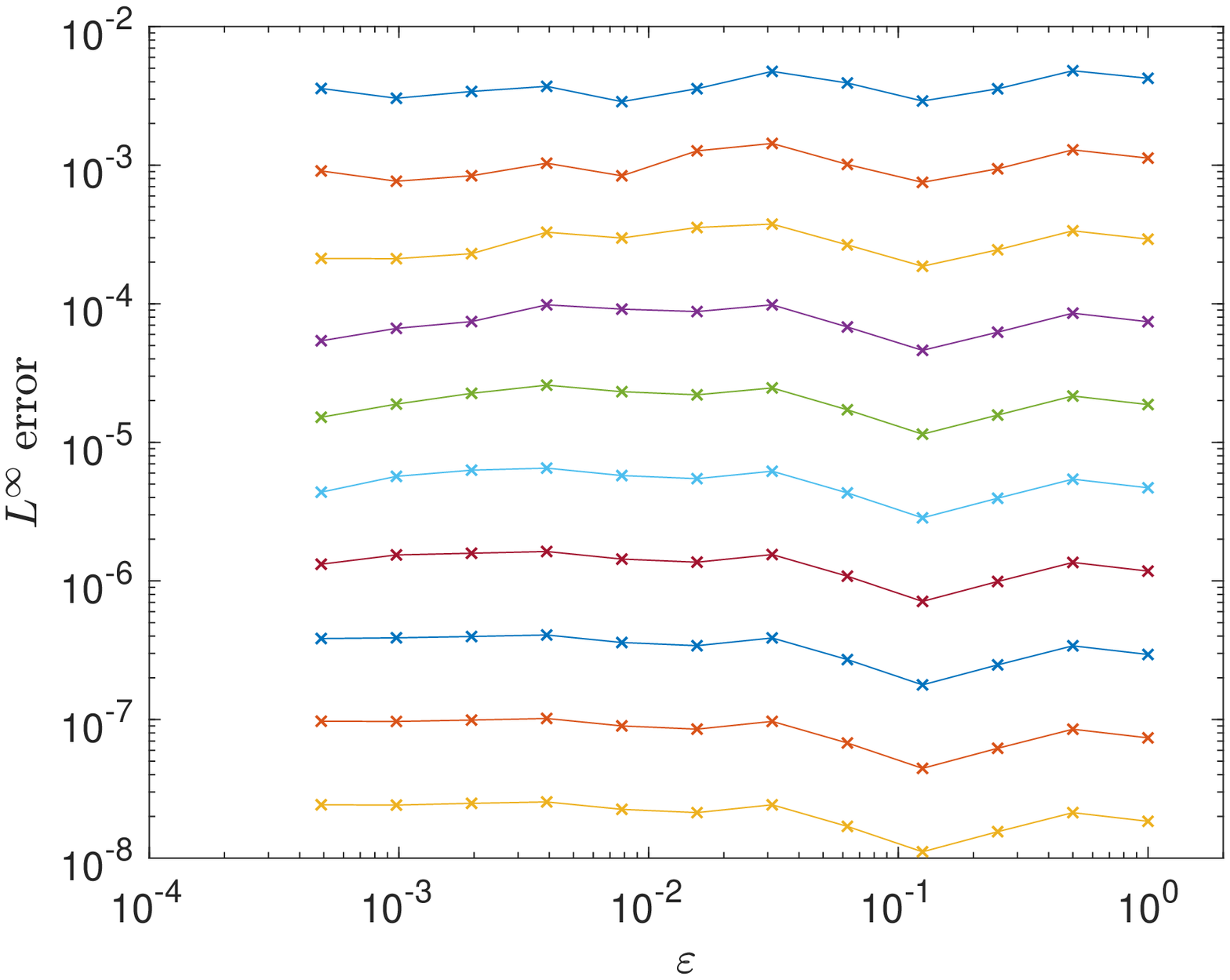}
\caption{Error as a function of $h$ for $\eps\in\{2^{-k},\,k=0,\cdots, 11\}$ (left) and error as a function of $\eps$ for $h\in\{0.1/2^{-k},\,k=0,\cdots, 9\}$ (right).}\label{fig2}

\end{figure}

\bigskip

\noindent \textbf{Acknowledgements.}
The work of P.C., M.L., and F.M. is partially supported by the ANR project Moonrise ANR-14-CE23-0007-01.
The work of G.V. is partially supported by the Swiss National Science Foundation, grants No: 200020\_178752 and 200021\_162404.

\bibliographystyle{alpha}
\bibliography{biblioNCF}

\begin{thebibliography}{CCMSS11}

\bibitem[AD18]{ArD18}
A.~Arnold and K.~D\"opfner.
\newblock Stationary {S}chr\"odinger equation in the semi-classical limit:
  {WKB}-based scheme coupled to a turning point.
\newblock {\em Submitted}, arXiv:1805.10502, 2018.

\bibitem[CCLM15]{CCLM15}
Ph. Chartier, N.~Crouseilles, M.~Lemou, and F.~M\'ehats.
\newblock Uniformly accurate numerical schemes for highly oscillatory
  {K}lein-{G}ordon and nonlinear {S}chr\"odinger equations.
\newblock {\em Numer. Math.}, 129(2):211--250, 2015.

\bibitem[CCMSS11]{CCMSS11}
M.~P. Calvo, Ph. Chartier, A.~Murua, and J.~M. Sanz-Serna.
\newblock Numerical stroboscopic averaging for {ODE}s and {DAE}s.
\newblock {\em Appl. Numer. Math.}, 61(10):1077--1095, 2011.

\bibitem[CLM13]{CLM13}
N.~Crouseilles, M.~Lemou, and F.~M\'ehats.
\newblock Asymptotic preserving schemes for highly oscillatory
  {V}lasov-{P}oisson equations.
\newblock {\em J. Comput. Phys.}, 248:287--308, 2013.

\bibitem[CLM17]{CLM17}
Ph. Chartier, M.~Lemou, and F.~M\'ehats.
\newblock Highly-oscillatory evolution equations with multiple frequencies:
  averaging and numerics.
\newblock {\em Numer. Math.}, 136(4):907--939, 2017.

\bibitem[CLMV18]{CLMV18}
Ph. Chartier, M.~Lemou, F.~M\'ehats, and G.~Vilmart.
\newblock A new class of uniformly accurate numerical schemes for highly
  oscillatory evolution equations.
\newblock {\em submitted to Found. Comput. Math.}, 2018.

\bibitem[CMSS10]{CMSS10}
Ph. Chartier, A.~Murua, and J.~M. Sanz-Serna.
\newblock Higher-order averaging, formal series and numerical integration {I}:
  {B}-series.
\newblock {\em Found. Comput. Math.}, 10(6):695--727, 2010.

\bibitem[CMSS15]{CMSS15}
Ph. Chartier, A.~Murua, and J.~M. Sanz-Serna.
\newblock Higher-order averaging, formal series and numerical integration
  {III}: error bounds.
\newblock {\em Found. Comput. Math.}, 15(2):591--612, 2015.

\bibitem[HLW06]{HLW06}
E.~Hairer, Ch. Lubich, and G.~Wanner.
\newblock {\em Geometric numerical integration}, volume~31 of {\em Springer
  Series in Computational Mathematics}.
\newblock Springer-Verlag, Berlin, second edition, 2006.
\newblock Structure-preserving algorithms for ordinary differential equations.

\bibitem[Per69]{P69}
L.~M. Perko.
\newblock Higher order averaging and related methods for perturbed periodic and
  quasi-periodic systems.
\newblock {\em SIAM J. Appl. Math.}, 17:698--724, 1969.

\bibitem[SV85]{SV85}
J.~A. Sanders and F.~Verhulst.
\newblock {\em Averaging methods in nonlinear dynamical systems}, volume~59 of
  {\em Applied Mathematical Sciences}.
\newblock Springer-Verlag, New York, 1985.

\end{thebibliography}
 \end{document}